\documentclass{amsart}
\newtheorem{Theorem}{Theorem}
\newtheorem{Proposition}{Proposition}
\newtheorem{Lemma}{Lemma}
\newtheorem{Corollary}{Corollary}
\jot5mm
\newcommand{\Ecm}{\Gamma}
\newcommand{\Eam}{\Omega}
\newcommand{\rd}{{\mathbb R}^d}
\newcommand{\R}{{\mathbb R}}

\newcommand{\BK}{{\mathbb K}}

\newcommand{\cal}{\mathcal}
\newcommand{\ep}{\varepsilon}
\newcommand{\Lin}{{\rm Lin}}

\newcommand{\nor}{{\rm nor}}
\newcommand{\card}{{\rm card}}

\newcommand{\Forth}{F^{\perp}}
\newcommand{\bark}{k^*}
\newcommand{\barl}{l^*}
\jot5mm

\title[Integral representation of mixed volumes]{A product 
integral representation of mixed volumes of two convex bodies}

\author{Daniel Hug}
\thanks{The authors are grateful for support from the German Science Foundation (DFG) and 
the Czech Science Foundation (GA\v{C}R 201/10/J039) for the joint project ``Curvature Measures and Integral Geometry''} 
\address{Karls\-ruhe Institute of Technology, Department of Mathematics, 
D-76128 Karls\-ruhe, Germany}
\email{daniel.hug@kit.edu}
\urladdr{http://www.math.kit.edu/$\sim$hug/}

\author{Jan Rataj}
\address{Charles University, Faculty of Mathematics and Physics, 
Sokolovska 83, 186 75 Praha 8, 
Czech Republic}
\email{rataj@karlin.mff.cuni.cz}
\urladdr{http://www.karlin.mff.cuni.cz/$\sim$rataj/index\underline{ }en.html}

\author{Wolfgang Weil}
\address{Karls\-ruhe Institute of Technology, Department of Mathematics, 
D-76128 Karls\-ruhe, Germany}
\email{wolfgang.weil@kit.edu}
\urladdr{http://www.math.kit.edu/$\sim$weil/}

\date{\today}
\subjclass[2010]{52A20, 52A22, 52A39, 53C65}
\keywords{Mixed volumes, flag measures, Grassmannian, integral geometry, generalized curvatures.}
\thanks{The authors are grateful to a referee for useful remarks which helped to improve the presentation of the paper.}
\begin{document}

\begin{abstract}
The Brunn-Minkowski theory in convex geometry relies heavily on the notion of mixed volumes.  
Despite its particular importance, even explicit representations for the mixed volumes 
of two convex bodies in Euclidean space $\rd$ are available only in special cases. 
Here we investigate a new integral representation of such mixed volumes, in terms of 
flag measures of the involved convex sets. A brief introduction to (extended) flag measures 
of convex bodies is also provided.  
\end{abstract}

\maketitle

\section*{Authors' Addresses}

\noindent
Daniel Hug, Karlsruhe Institute of Technology, Department of Mathematics, 
D-76128 Karlsruhe, Germany, 
\email{daniel.hug@kit.edu}

\medskip

\noindent
Jan Rataj, Charles University, Faculty of Mathematics and Physics, 
Sokolovska 83, 186 75 Praha 8, 
Czech Republic, 
\email{rataj@karlin.mff.cuni.cz}

\medskip

\noindent
Wolfgang Weil, Karlsruhe Institute of Technology, Department of Mathematics, 
D-76128 Karlsruhe, Germany, 
\email{wolfgang.weil@kit.edu}

\section*{Running Title}

\noindent
Integral representation of mixed volumes

\section{Introduction}

Mixed volumes of convex bodies are a fundamental concept and tool 
in the classical Brunn-Minkowski theory of convex geometry. For two 
convex bodies (non-empty compact convex sets) in $\R^d, d\ge 2$, the mixed volumes 
$$
V(K [m], M [d-m]),\quad m=1,\dots ,d-1,$$
appear as coefficients in the generalized Steiner formula
$$
V_d(K+M) = \sum_{j=0}^d \binom{d}{ j}V(K [j], M [d-j]) 
$$
for the volume of the Minkowski sum $K+M$ of $K$ and $M$. Other classical notions, the support function $h(K,\cdot)$ of $K$ and the  area measure $S_{d-1}(M,\cdot )$ of $M$, are related to special mixed volumes through an integration over the unit sphere $S^{d-1}$,
$$
V(K [1], M [d-1]) = \frac{1}{d}\int_{S^{d-1}} h(K,u)\,S_{d-1}(M,{ d}u),
$$
a result which holds for all convex bodies $K,M$ (see \cite{S}, for details on the Brunn-Minkowski theory). Under some smoothness and symmetry assumptions, a similarly simple decomposition  exists 
for the  mixed volumes $V(K [m], M [d-m])$ with  $m\in\{2,...,d-2\}$. Namely, for $m\in \{0,\dots,d-1\}$,  
\begin{equation}\label{mv2}
V(K [m], M [d-m]) = \frac{2^{d-m}m!}{d!}\int_{G(d,d-m)} V_m(K|E^\perp)\,\rho_{d-m}(M,{d}E)
\end{equation} 
holds for instance if $M$ is a centrally symmetric and smooth 
body, whereas $K$ may be arbitrary. Here, $G(d,d-m)$ is the Grassmannian of $(d-m)$-dimensional 
linear subspaces of $\R^d$, $V_m(K|E^\perp)$ is the $m$-dimensional volume of the 
orthogonal projection of $K$ onto the orthogonal complement $E^\perp\in G(d,m)$ of $E\in G(d,d-m)$,  
 and the signed measure $\rho_{d-m}(M,\cdot )$ is the $(d-m)$th projection 
 generating measure of $M$, normalized as in \cite[p.~1315]{GW93}. If $K$ is  
 centrally symmetric and smooth, then the projection function $V_m(K|\cdot)$ has the integral representation
 \begin{equation}\label{mv2a}
V_m(K|E^\perp) = \frac{2^m}{m!}\int_{G(d,m)}|\langle E^\perp,F\rangle|\,
 \rho_m(K,{d}F),
\end{equation} 
where $|\langle E^\perp, F\rangle|$ denotes the absolute value of the determinant of the 
orthogonal projection of $E^\perp$ onto $F$. Hence in this case, where both bodies $K$ and $M$ are centrally symmetric and smooth, we obtain the  relation
\begin{eqnarray}\label{mv3}
&&V(K [m], M [d-m]) \nonumber \\
&&\qquad\qquad \, = \frac{2^d}{d!}\int_{G(d,d-m)} \int_{G(d,m)}|\langle E^\perp,F\rangle|\,
 \rho_m(K,{d}F)\,\rho_{d-m}(M,{d}E).
\end{eqnarray} 
Let  $\mathbb{F}_{m,d-m}(K,M)$ denote the right-hand side of this equation. It involves the $m$th and 
the $(d-m)$th projection generating measure,     
$\rho_m(K,\cdot)$ and $\rho_{d-m}(M,\cdot)$, of $K$ and $M$, respectively, as well as basic information about 
the relative position of the subspaces $E$ and $F$. Since $|\langle E^\perp,F\rangle|=|\langle F^\perp,E\rangle|$,  
we have the symmetry relation $ \mathbb{F}_{m,d-m}(K,M)=\mathbb{F}_{d-m,m}(M,K)$.

It is known that \eqref{mv2} holds for arbitrary $K$ and generalized zonoids $M$, \eqref{mv2a} holds for generalized zonoids $K$ and, therefore, \eqref{mv3} remains true if $K$ and $M$ are both generalized zonoids. For an introduction to zonoids and generalized zonoids, we 
refer to the surveys \cite{SWsurvey83,GW93} and to \cite{S}.  
 Relations \eqref{mv2a} and \eqref{mv3} easily follow,  for instance, from  \cite[Theorem 2.5]{GW93} (see also \cite[Theorem 5.3.1]{S}). The more general relation \eqref{mv2}, which also 
yields \eqref{mv2a} and \eqref{mv3} as simple consequences, can be deduced from the multilinearity 
of mixed volumes by first considering the mixed volume $V_d(K[m],M_1,\ldots,M_{d-m})$ 
with (generalized) zonoids $M_1,\ldots,M_{d-m}$ (cf.\ \cite[(9.7)]{SWsurvey83}).  In spite of the generality 
of relation \eqref{mv2}, it does not even seem possible to obtain a similar result (for $m>1$) for all 
centrally symmetric  polytopes as long as such a relation is based 
on integrals over Grassmannians. 
In fact, if \eqref{mv2} holds for a symmetric polytope $M$ (with interior points) and all smooth symmetric bodies $K$, then $M$ lies in the class $\mathcal{Q}_s(d-m,m)$, considered in \cite{GW91}. But then 
Theorem 4.1 in \cite{GW91} together with the remark in \cite[p.~128, l.~1]{GW91}
implies that all $(d-m+1)$-dimensional faces of $M$ have to be centrally symmetric. Thus, $M$ cannot be an octahedron, for example. A similar argument shows that also \eqref{mv3} cannot be extended to all symmetric polytopes $K$ or $M$.

 In the following, we use flag measures of convex bodies to show that a formula generalizing \eqref{mv3} holds with Grassmannians replaced by certain flag manifolds, associated with the given convex bodies $K$ and $M$, respectively. The result, which we shall prove, yields
\begin{eqnarray} \label{mv4}
&&V(K[m],M[d-m])\nonumber\\
&&\qquad\qquad \,=\iint f_{m,d-m}(u,U,v,V)\, \Eam_m(K;d(u,U))\,\Eam_{d-m}(M;d(v,V)) ,
\end{eqnarray}
where $\Eam_m(K;\cdot)$ and $\Eam_{d-m}(M;\cdot)$ are flag measures of $K$ and $M$, the function $f_{m,d-m}$ is independent of $K$ and $M$, and the integration is over the manifold of flags $(u,U)$  (respectively $(v,V)$). 
 For this formula, no symmetry or smoothness assumptions on $K$ or $M$ have to be imposed. However, we have to assume 
 that $K$ and $M$ are in general relative position with respect to each other. If $K$ and $M$ are 
 polytopes, this condition is, for instance, satisfied if $K$ and $M$ do not have parallel faces of complementary 
 dimension.  See Section \ref{results}, for precise definitions and Theorem \ref{thm3} for the explicit result. 
 
 Flag measures were already used in \cite{H02} (see also \cite{GHHRW}) to provide an integral representation of projection functions. Here  we proceed in a different, more direct way and establish a representation result for special mixed volumes. Our approach is based on general integral geometric  results from \cite{RoZ92,R02} for sets of positive reach, which are applied to convex sets.  This yields extended flag  measures which are related to the measures introduced in \cite{H02}, \cite{HHW} by 
means of a local Steiner formula. The formula we thus obtain includes also a formula for projection functions, although in a less explicit form than in \cite{H02}, \cite{GHHRW}.

The setup of the paper is as follows. After some preliminaries in the next section, we  introduce, in Section 3, the extended flag measures of a convex body. In Section \ref{results}  we formulate our main results, Theorems \ref{main} and \ref{thm3}. Theorem \ref{main} provides an integral representation not for the mixed volumes but for certain $\epsilon$-approximations  of these. Theorem \ref{thm3}, which implies the representation \eqref{mv4}, is deduced from Theorem \ref{main} by an approximation argument. After some preparations in Section 5, we give the corresponding proofs in Sections 6 and 7.
 The final section contains an example which shows that a simple extension of \eqref{mv4} to bodies which are not in general relative position is not possible in general.

\section{Preliminaries}
Let $\rd$ be the Euclidean space with scalar product $\langle \cdot,\cdot\rangle$ and norm $\|\cdot\|$. The unit ball 
and the unit sphere of $\rd$ are denoted by $B^d$ and $S^{d-1}$, respectively. 
For a given $k\in\{0,\ldots,d\}$, we denote by  $\bigwedge_k\rd$ 
the $\binom{d}{k}$-dimensional linear space of $k$-vectors in $\rd$. As usual, we identify $\bigwedge_0\rd$ with $\R$. 
The vector space  $\bigwedge_k\rd$ is equipped with the scalar product $\langle\cdot ,\cdot\rangle$ (cf.~\cite[\S 1.7.5]{Federer69}).  
For simple $k$-vectors, the scalar product is given by
$$\langle u_1\wedge\cdots\wedge u_k,v_1\wedge\cdots\wedge v_k\rangle =
\det\left(\left(\langle u_i, v_j\rangle \right)_{i,j=1}^k\right),$$
where $u_1,\ldots,u_k,v_1,\ldots,v_k\in\R^d$. 
The induced norm on $\bigwedge_k\rd$ is denoted by $\|\cdot\|$.  This notation is consistent with the one for elements of $\R^d$ which can also be viewed as $1$-vectors. 
The operation of the orthogonal group $O(d)$ on $\rd$ is extended to an operation of  $O(d)$ on $\bigwedge_k\rd$ in the canonical way (\cite[\S 1.3.1]{Federer69}); see \cite[Chapter 1]{Federer69} for a brief introduction to multilinear algebra as used here. 
Let $G_0(d,k)$ be the subset of $\bigwedge_k\rd$ which consists of the simple $k$-vectors  with norm one (oriented Grassmann  manifold). 
The Grassmann manifold $G(d,k)$ of $k$-dimensional linear subspaces of $\R^d$ is the quotient space of 
  $G_0(d,k)$ with respect to the equivalence relation 
$\sim$ defined by $\xi\sim\zeta$ if and only if $\xi=\pm \zeta$, for $\xi, \zeta\in G_0(d,k)$. With a simple unit $k$-vector 
$u_1\wedge\ldots\wedge u_k$ we associate the  linear subspace $U=\{x\in\R^d:x\wedge u_1\wedge\ldots\wedge u_k=0\}$ 
which is just the $k$-dimensional linear subspace spanned by $u_1,\ldots,u_k$. Conversely, for a linear subspace $U$ we may choose an orthonormal basis $u_1,\ldots,u_k$ of $U$. Then $u_1\wedge\ldots\wedge u_k$ 
is a simple unit $k$-vector for which $U$ is the associated subspace. Moreover, up to the sign, this simple unit $k$-vector 
is uniquely determined in this way (of course, the explicit representation of the $k$-vector is not unique). See \S 1.6.1, \S  1.6.2 and, in particular, p.~267 in \cite{Federer69}, for further details and \cite{JensenKieu92} for a similar description.

The $j$-dimensional Hausdorff measure in a metric space 
will be denoted by $\mathcal{H}^j$, where we adopt the same normalization as in \cite[\S 2.10.2, p.~171]{Federer69}. 
Let $\nu_k^d$ denote the $O(d)$ invariant measure on $G(d,k)$ normalized to a probability measure. 
Thus, $\nu_k^d$ is equal to a multiple of the $k(d-k)$-dimensional Hausdorff measure on $G(d,k)$,
$$\nu_k^d=\beta(d,k)^{-1} \,{\cal H}^{k(d-k)}\llcorner G(d,k) ,$$
where $\llcorner$ denotes the restriction of a measure to a subset. 
The explicit value of the numerical constant $\beta(d,k)$ is the total Hausdorff measure of the Grassmannian which is provided in 
\cite[p.\ 267]{Federer69} and is equal to
$$
\beta(d,k)=\Gamma\left(\frac{1}{2}\right)^{k(d-k)}\prod_{j=1}^k\frac{\Gamma(\frac{j}{2})}{\Gamma(\frac{d-j+1}{2})}.
$$
The corresponding invariant probability measure on $G_0(d,k)$ is denoted 
by $\bar{\nu}^d_k$.

In the following, we consider the {\it flag manifold}
$$\Forth(d,k)=\{ (u,V)\in S^{d-1}\times G(d,k):\, u\perp V\} ,$$
where $u\perp V$ means that $u$ is orthogonal to the linear subspace $V$.  

If $K$ is a convex body in $\rd$, let $\partial K$ denote its topological boundary, and let
$$\nor (K)=\{ (x,u)\in\partial K\times S^{d-1}:\,\langle u  , y-x\rangle\leq 0\mbox{ for all } y\in K\}$$
be its unit normal bundle. This is a $(d-1)$-rectifiable set. 

Subsequently, it is convenient to use the shorthand notation 
$\bark$ for $d-1-k$, where $k\in\{0,\ldots,d-1\}$. 
The $k$th {\it support measure} $\Xi_k(K;\cdot)$ of $K$ is a measure on $\R^d\times S^{d-1}$ 
which is concentrated on $\nor( K)$ and can be represented in the form
\begin{align*}
&\int g(x,u)\,\Xi_k(K;d(x,u))\\
&\qquad =\frac 1{{\cal H}^{\bark}(S^{\bark})}\int_{\nor (K)}g(x,u)\sum_{|I|=
\bark}
\BK_I(K;x,u)\,{\cal H}^{d-1}(d(x,u)),
\end{align*}
where $g$ is any bounded measurable function on $\rd\times\ S^{d-1}$, $I$ denotes a subset of $\{ 1,\ldots ,d-1\}$ of cardinality $|I|$,
$$\BK_I(K;x,u)=\frac{\prod_{i\in I}k_i(K;x,u)}{\prod_{i=1}^{d-1}\sqrt{1+k_i(K;x,u)^2}},$$
and the numbers $k_i(K;x,u)\in [0,\infty]$ are the generalized principal curvatures of $K$ at $(x,u)\in\nor (K)$, $i=1,\ldots ,d-1$. If $k_i(K;x,u)=\infty$ for some $i\in\{1,\ldots,d-1\}$, then $\BK_I(K;x,u)$ is determined as the limit which is obtained as 
$k_i(K;x,u)\to \infty$. In particular, this implies $\frac 1{\sqrt{1+\infty^2}}=0$ and $\frac \infty{\sqrt{1+\infty^2}}=1$. Moreover, a  product over an empty index set is considered as a factor one. The generalized principal curvatures are defined for ${\cal H}^{d-1}$-almost all $(x,u)\in \nor (K)$. In the following, we do not repeat this fact (also in similar situations). We refer to \cite{Zaehle86,Hug98} for background information and an introduction to these generalized curvatures and measures from the viewpoint of geometric measure theory.
We also use the notation
$$A_I(K;x,u)=\Lin\{ a_i(K;x,u):\, i\in I\},$$
where $a_i(K;x,u)\in S^{d-1}$, $i=1,\ldots ,d-1$, are generalized principal directions of curvature of $K$ at $(x,u)$, which form an orthonormal basis of $u^\perp$ (the subspace orthogonal to $u$), and $\Lin$ denotes the linear hull. If $I=\emptyset$, then $A_I(K;x,u)=\{0\}$.   Sometimes it is convenient to consider $A_I(K;x,u)$ as a multivector  (cf.\ Section 4), i.e.\ 
$$
A_I(K;x,u)=\textstyle{\bigwedge}_{i\in I}a_i(K;x,u).
$$ 
Here, the right-hand side is $1\in\bigwedge_0\R^d$ if $I=\emptyset$. 

The support measures naturally arise as coefficients in a local Steiner formula. For a Borel   set $\eta \subset \rd\times S^{d-1}$ and  $\varepsilon\ge 0$, we define the local parallel set 
$$M_\varepsilon(K,\eta):=\{x+tu:(x,u)\in \nor (K)\cap \eta,t\in (0,\varepsilon]\}.
$$ 
Then the local Steiner formula can be expressed in the form 
$$
\mathcal{H}^d(M_\varepsilon(K,\eta))=\sum_{j=0}^{d-1}\varepsilon^{d-j}\kappa_{d-j}\,\Xi_j(K;\eta),
$$
where $\kappa_j=\mathcal{H}^j(B^j)$ is the $j$-dimensional volume of the $j$-dimensional unit ball; 
see, e.g., \cite{S,SW}. The image of $\Xi_k(K;\cdot)$ under the projection $(x,u)\mapsto u$ is the $k$th area measure $\Psi_k(K,\cdot)$ of $K$, the total measure $V_k(K)=\Xi_k(K;\rd\times S^{d-1})$ is the $k$th intrinsic volume of $K$. 
Sometimes other normalizations are used in the literature. For instance, in convex geometry
$$
S_k(K,\cdot)=\frac{d\kappa_{d-k}}{\binom{d}{k}}\,\Psi_k(K,\cdot)
$$
is often called the $k$th area measure of $K$, and we shall prefer this normalization and terminology.

\section{Flag measures}
In this section, we provide a brief introduction to flag measures as is appropriate for the present purpose. 
A more detailed introduction is provided in  
\cite{H02} and \cite{HHW} (see also \cite[Section 8.5]{SW}, for a description of the underlying ideas). 

Let $K$ be a convex body in $\rd$ and $k\in \{0,\ldots, d-1\}$. Recall that for brevity we write  $\bark=d-1-k$, in the sequel. The set 
$$\nor_k(K)=\{ (x,u,V)\in\partial K\times \Forth(d,\bark):\, (x,u)\in\nor (K)\}$$
is $p$-rectifiable and ${\cal H}^p$-measurable with $p=d-1+k\bark$ (see \cite[Lemma~1]{R02}). 
For $x\in\rd$ and a linear subspace $U\subset\rd$, let $U^\perp$ denote the 
orthogonal complement of $U$, $x|U$ the orthogonal projection of $x$ onto $U$, and $K|U$ 
the orthogonal projection of $K$ onto $U$. Moreover, we write $\partial(K|U)$ for the 
topological boundary of $K|U$ with respect to $U$ as the ambient space. For a given convex body $K$ in $\R^d$ and for a fixed $k\in\{0,\ldots,d-1\}$, 
we consider the projection map 
$$f:(x,u,V)\mapsto (x|V^\perp,V),\qquad (x,u,V)\in  \nor_k(K).$$
The $k$th {\it extended flag measure} $\Gamma_k(K;\cdot)$ of $K$ (for related 
notions, cf.\ \cite{RoZ92,R02,H02}) is a measure on $\rd\times \Forth(d,\bark)$ defined by
\begin{eqnarray*}
\lefteqn{\int g(x,u,V)\, \Ecm_k(K;d(x,u,V))}\\ &=&\tilde{\gamma}(d,k)\int_{G(d,\bark)}\int_{\partial (K|V^\perp)}\left( \sum_{(x,u,V)\in f^{-1}\{ (z,V)\}}g(x,u,V)\right){\cal H}^k(dz)\,\nu_{\bark}^d(dV),
\end{eqnarray*}
where 
$$\tilde{\gamma}(d,k)=\frac 12 \binom{d-1}{ k}
\frac{\Gamma\left(\frac{d-k}2\right)\Gamma\left(\frac{k+1}2\right)} {\Gamma\left(\frac 12\right)\Gamma\left(\frac d2\right)}$$
and $g$ is any bounded measurable function on $\rd\times \Forth(d,\bark)$. 
Note that a result due to Zalgaller \cite{Zal72} implies that $f^{-1}\{ (z,V)\}$ is a singleton, for 
$\nu_{\bark}^d$-almost all $V\in G(d,\bark)$ and ${\cal H}^k$-almost all $z\in\partial (K|V^\perp)$ 
(see \cite[p.\ 89, Corollary 2.3.11]{S} for this and more general results). The normalizing constant 
$\tilde{\gamma}(d,k)$ is chosen such that $\Ecm_k(K;\cdot \times G(d,k^*))=\Xi_k(K;\cdot)$ (see below).

The projection of $\Ecm_k(K;\cdot )$ onto the flag manifold $\Forth(d,\bark)$ will 
be called the $k$th {\it flag measure} $\Omega_k(K;\cdot)$ of $K$; it is given by
\begin{align*}
&\int g(u,V)\, \Eam_k(K;d(u,V))\\
&\qquad\qquad=\tilde\gamma(d,k)\int_{G(d,\bark)}
\int_{\partial (K|V^\perp)}\sum g(u,V)\,{\cal H}^k(dz)\,\nu_{\bark}^d(dV),
\end{align*}
where $g$ is now a bounded measurable function on $F^\perp(d,\bark)$ and 
the summation is extended over all  exterior unit normal vectors $u\in V^\perp\cap S^{d-1}$ 
 of $\partial(K|V^\perp)$ at $z$. If $K|V^\perp$ is $(k+1)$-dimensional, then  $u$ is uniquely determined,  
for ${\cal H}^k$-almost all $z\in  \partial(K|V^\perp)$ (cf.~\cite[p.~73]{S}). If $\dim(\partial(K|V^\perp))=k$, then 
$u$ is unique up to the sign, for ${\cal H}^k$-almost all $z\in  \partial(K|V^\perp)$. 
Finally, if $\dim(\partial(K|V^\perp))<k$, then the inner integral vanishes. 
Thus, by \cite[p.~209 and Theorem 4.2.5]{S}, we obtain
\begin{eqnarray*}
&&\int g(u,V)\, \Eam_k(K;d(u,V))\\
&&\quad =\tilde\gamma(d,k)\int_{G(d,{k+1})}
\int_{S^{d-1}\cap U}g(u,U^\perp)\,S^U_k(K\vert U,du)\,\nu_{{k+1}}^d(dU),
\end{eqnarray*}
where $S_k^U(K\vert U,\cdot)$ 
is the $k$th area measure of the orthogonal projection of $K$ onto $U$, 
with respect to $U$ as the ambient space. Note that this relation holds irrespective 
of the dimension of $K|U$.

Subsequently, we shall use the 
area/coarea formula. 
A suitable version for our purposes can be stated in the following setting. Let $W\subset\R^n$ be $m$-rectifiable, let $Z\subset \R^\nu$ 
be $\mu$-rectifiable, for integers $m\ge \mu\ge 1$, and let $T:W\to Z$ be a Lipschitz map. Then the 
$(\mathcal{H}^m\llcorner W,m)$ approximate $\mu$-dimensional Jacobian of $T$ is denoted by $\text{ap }J_\mu T(w)$ whenever 
$T$ is $(\mathcal{H}^m\llcorner W,m)$ approximately differentiable at $w\in W$. This is the case for $\mathcal{H}^m$-almost all 
$w\in W$. The coarea formula states that for every nonnegative measurable function $g:W\to\R$, we have
$$
\int_W\text{ap }J_\mu T(w)g(w)\,\mathcal{H}^m(dw)=\int_Z\int_{T^{-1}(\{z\})}g(w)\,\mathcal{H}^{m-\mu}(dw)\,\mathcal{H}^{\mu}(dz),
$$
which we shortly summarize as $J_\mu T(w)\,\mathcal{H}^m(dw)=\mathcal{H}^{m-\mu}(dw)\,\mathcal{H}^{\mu}(dz)$. The area formula is  the special case $\mu=m$. 
For more details we refer to \cite[\S 3.2]{Federer69} or \cite[Chapter 3]{Simon}, special versions of the coarea formula 
are described in \cite[Chapter 3]{EG} and \cite[Chapter 5]{KP}. In the following, as in \cite{Simon} we simply write $J_\mu T(w)$ instead of 
the more elaborate notation $\text{ap }J_\mu T(w)$.

We now provide another description of $\Ecm_k(K;\cdot )$. Let $A(d,k)$ denote the affine 
Grassmannian of $k$-dimensional flats (affine subspaces) in $\rd$. Then we define 
$A(K;d,\bark)=f(\nor_k(K))$. Identifying $(z,V)\in A(K;d,\bark)$ 
with $z+V\in A(d,\bark)$, we can interpret $A(K;d,\bark)$ as the set of 
tangent affine $\bark$-flats of $K$.  
Let the projection $P:(z,V)\mapsto V$ be defined on ${A}(K;d,\bark)$. 
By the coarea formula, we thus obtain
(see \cite{Federer69}) 
$$\beta(d,\bark)^{-1}J_{\bark(d-\bark)}P(z,V){\cal H}^p(d(z,V))={\cal H}^k(dz)\nu_{\bark}^d(dV)$$
on $A(K;d,\bark)$. First using this  and  
then the area formula for $f$, we get
\begin{eqnarray*}
&&\int g(x,u,V)\, \Ecm_k(K;d(x,u,V) )\\
&&\qquad =\tilde{\gamma}(d,k)\beta(d,k^*)^{-1}\int_{{A}(K;d,\bark)}\left( \sum_{(x,u,V)\in f^{-1}\{( z,V)\}}g(x,u,V)\right) \\
&&\qquad\qquad \times\, J_{k^*(d-k^*)}P(z,V) \,{\cal H}^p(d(z,V))\\
&&\qquad=\tilde{\gamma}(d,k)\beta(d,\bark)^{-1}\int_{\nor_k(K)}
J_pf(x,u,V)J_{\bark(d-\bark)}P(f(x,u,V))\\
&&\qquad\qquad \times\, g(x,u,V)\, {\cal H}^p(d(x,u,V)),
\end{eqnarray*}
for all bounded measurable functions $g$ on $\rd\times \Forth(d,\bark)$. 
We need a representation of $\Gamma_k(K;\cdot)$ as an integral over the unit normal bundle of $K$. This can be derived from the last expression 
by applying the projection $\Pi: \nor_k(K)\to\nor (K)$, $(x,u,V)\mapsto (x,u)$. The corresponding Jacobians were 
computed in \cite{RoZ92}, and the computation can be summarized by
\begin{eqnarray*}
\lefteqn{J_pf(x,u,V)J_{\bark(d-\bark)}P(f(x,u,V))}\\
&=&J_{d-1}\Pi(x,u,V)\sum_{|I|=\bark}\BK_I(K;x,u)\langle A_I(K;x,u),V\rangle^2.
\end{eqnarray*}
Thus, by another application of the coarea formula, it follows that
\begin{eqnarray} 
&&\int g(x,u,V)\, \Ecm_k(K;d(x,u,V) )\nonumber\\
&&\qquad =\gamma(d,k)\int_{\nor (K)}\sum_{|I|=
\bark}\BK_I(K;x,u)\int_{G^{u^\perp}(d-1,\bark)} g(x,u,V)
\label{ECM}\\
&&\qquad \qquad\times\, \langle V,A_I(K;x,u)\rangle^2\, 
\nu^{d-1}_{\bark}(dV)\,{\cal H}^{d-1}(d(x,u)), \nonumber
\end{eqnarray} 
where 
$$\gamma(d,k)=\frac{\binom{d-1}{k}}{{\cal H}^{\bark}(S^{\bark})}=\tilde{\gamma}(d,k)\frac{\beta(d-1,\bark)}{\beta(d,\bark)}$$
and $G^{u^\perp}(d-1,j)$ is the Grassmannian of $j$-dimensional linear subspaces of $u^\perp$. In the scalar product 
$\langle V,A_I(K;x,u)\rangle^2$, we interpret $V$ and $A_I(K;x,u)$ as one of the two possible associated elements of the 
oriented Grassmannian $G_0^{u^\perp}(d-1,k^*)$. This representation is 
similar to the one for the support measures $\Ecm_k(K;\cdot)$. The crucial difference is that for each $(x,u)$ in the normal bundle of $K$ and for each $I$, the flag measures involve 
an additional averaging  of $g(x,u,V)\langle V,A_I(K;x,u)\rangle^2$ over the linear subspaces $V\in G^{u^\perp}(d-1,\bark)$; these averages are exactly the weights with which the  products $\BK_I(K;x,u)$ of generalized curvatures have to be multiplied.

From this representation it can be seen that the projection $\Pi$ maps $\Ecm_k(K;\cdot)$ to the support measure $\Xi_k(K;\cdot )$. In fact, if $g$ is independent of $V$, then for each $I$, $g(x,u)$ can be removed from the inner integral in \eqref{ECM} and 
the resulting integral then 
is equal to $\binom{d-1}{k}^{-1}$. To verify this, we interpret $V$ and $A_I=A_I(K;x,u)$ as elements of 
$G^{u^\perp}_0(d-1,\bark)$. Then the multivectors $A_I$ with $I\subset\{1,\ldots,d-1\}$ and $|I|=\bark$ form an orthonormal basis of 
$G^{u^\perp}_0(d-1,\bark)$ (cf.\ Section \ref{IoG}) and therefore $\sum_{|I|=\bark}\langle V,A_I\rangle^2=1$. Since 
$$
\int_{G^{u^\perp}(d-1,\bark)}  \langle V,A_I \rangle^2\, 
\nu^{d-1}_{\bark}(dV)
$$
is independent of $I$, the assertion follows.  In particular, we get $\Omega_k(K;\cdot\times G(d,k^*))=\binom{d}{k}(d\kappa_{d-k})^{-1}S_k(K,\cdot)$.

The extended flag measures $\Gamma_k$  also arise naturally, as coefficients  in a Steiner formula 
for affine flats; see, for instance, \cite{H02} and \cite{HHW}.

\section{Integral representation of mixed volumes}\label{results}
Given two convex bodies $K,L$ in $\rd$ and $0\leq k\leq d$, let us denote by
\begin{equation}\label{laterefs}
V_{k,d-k}(K,L)=\binom{d}{ k}V(K[k],-L[d-k])
\end{equation}
a multiple of the mixed volume of $k$ copies of $K$ and $(d-k)$ copies of $-L$. 
These functionals agree with the coefficients in the translative intersection formula for the 
Euler characteristic $V_0$, that is
$$\int_{\rd}V_0(K\cap (L+z))\, {\cal H}^d(dz)=\sum_{k=0}^dV_{k,d-k}(K,L),$$
see \cite{S,SW}. 

Let $k,l\in \{1,\ldots,d-1\}$ be such that $k+l=d$. For the functionals $V_{k,l}$ 
an integral representation has been proved in \cite[Theorem~2]{RZ95} which we shall use subsequently. 
The angle between unit vectors $u,v\in\rd$ is denoted by $\angle(u,v)\in [0,\pi]$. Then we have
\begin{eqnarray}
V_{k,l}(K,L)&=&\int_{\nor (K)\times\nor (L)}F_{k,l}(\angle (u,v))\sum_{|I|=\bark}\sum_{|J|=\barl} \BK_I(K;x,u)\BK_J(L;y,v) \nonumber\\
&&\times \left\|A_I(K;x,u)\wedge u\wedge A_J(L;y,v)\wedge v\right\|^2\, {\cal H}^{2d-2}(d(x,u,y,v)),
\label{IR}
\end{eqnarray}
where  
$$F_{k,l}(\theta )=\frac 1{{\cal H}^{d-1}(S^{d-1})}\frac\theta{\sin\theta}\int_0^1\left(\frac{\sin t\theta}{\sin\theta}\right)^{\bark} \left(\frac{\sin (1-t)\theta}{\sin\theta}\right)^{\barl}\, dt,\qquad \theta\in [0,\pi),$$
and $A_I(K;x,u)$ and $A_J(L;y,v)$ are viewed as multivectors. 
As usual, we put $  0/{\sin 0}=1$. The ratios $\theta/\sin\theta$ and $\sin t\theta/\sin\theta$ remain bounded 
for $\theta\in (0,\pi/2]$, uniformly in $t\in[0,1]$. However, as $\theta$ approaches $\pi$, these expressions  become unbounded. 
So far $F_{k,l}(\pi )$ has not been defined (cf.~also \cite{RZ95}). We can fix $F_{k,l}(\pi )\in[0,\infty)$ arbitrarily, since  $\theta=\pi$ corresponds to $u=-v$, and in this case we have   
$\left\|A_I(K;x,u)\wedge u\wedge A_J(L;y,v)\wedge v\right\|=0$ in \eqref{IR}. 

In addition, we introduce the bounded approximations
$$F_{k,l}^{(\ep)}(\theta)=F_{k,l}(\theta){\bf 1}\{0\leq \theta\leq\pi-\ep\}, \qquad \ep >0,\ \theta\in [0,\pi),$$
and
\begin{eqnarray}
V_{k,l}^{(\ep)}(K,L)&=&\int_{\nor (K)\times\nor (L)}F_{k,l}^{(\ep)}
(\angle (u,v))\sum_{|I|=\bark}\sum_{|J|=\barl} \BK_I(K;x,u)\BK_J(L;y,v)\nonumber \\
&&\times \left\|A_I(K;x,u)\wedge u\wedge A_J(L;y,v)\wedge v\right\|^2\, {\cal H}^{2d-2}(d(x,u,y,v)),\label{7late}
\end{eqnarray}
so that
$$F_{k,l}^{(\ep)}\nearrow F_{k,l}\quad \mbox{ and }\quad V_{k,l}^{(\ep)}(K,L)\nearrow V_{k,l}(K,L),
\quad \text{as }\ep\searrow 0.$$
Here and in the following the symbols $\nearrow$ and $\searrow$ indicate that the limit is approached via an increasing,  respectively a decreasing sequence.

For the bounded approximations of the mixed volumes of two convex bodies, we obtain the following integral representations 
in terms of the  flag measures of the bodies involved.

\begin{Theorem}  \label{main}
Let $k,l\in\{1,\ldots,d-1\}$ and $k+l=d$. Then there exists a continuous function 
$\varphi^{k,l}$ on $F^\perp(d,\bark)\times F^\perp(d,\barl)$ such that
\begin{equation} \label{IR1}
V_{k,l}^{(\ep)}(K,L)=\iint F_{k,l}^{(\ep)}(\angle(u,v))\varphi^{k,l}(u,U,v,V)\, \Eam_k(K;d(u,U))\,\Eam_l(L;d(v,V))
\end{equation}
for arbitrary convex bodies $K,L\subset\R^d$ and $\ep>0$.
\end{Theorem}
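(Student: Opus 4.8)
The plan is to deduce \eqref{IR1} from the normal-bundle representation \eqref{7late} of $V_{k,l}^{(\ep)}(K,L)$ by transferring the curvature data into the flag measures. First I would insert the representation \eqref{ECM}, projected to the flag manifold (i.e.\ written for $\Eam_k$ and $\Eam_l$), into the right-hand side of \eqref{IR1}. Since $F_{k,l}^{(\ep)}$ is bounded (this is precisely the role of the $\ep$-truncation), $\varphi^{k,l}$ is continuous on the compact flag manifolds, and the flag measures of compact bodies are finite, Fubini's theorem applies and all integrations may be interchanged. The factor $F_{k,l}^{(\ep)}(\angle(u,v))$ depends only on $u,v$ and hence pulls out of the inner averages over $G^{u^\perp}(d-1,\bark)$ and $G^{v^\perp}(d-1,\barl)$. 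Comparing the result term by term (in $I$ and $J$) with \eqref{7late}, the theorem reduces to producing a continuous $\varphi^{k,l}$ for which the pointwise identity
\begin{align*}
&\gamma(d,k)\,\gamma(d,l)\int_{G^{u^\perp}(d-1,\bark)}\int_{G^{v^\perp}(d-1,\barl)}\varphi^{k,l}(u,U,v,V)\,\langle U,A_I\rangle^2\langle V,A_J\rangle^2\,\nu^{d-1}_{\barl}(dV)\,\nu^{d-1}_{\bark}(dU)\\
&\qquad=\left\|A_I\wedge u\wedge A_J\wedge v\right\|^2
\end{align*}
holds for all $u,v\in S^{d-1}$ and all simple unit multivectors $A_I\subset u^\perp$, $A_J\subset v^\perp$ of the respective degrees $\bark$ and $\barl$; the arbitrariness of $K,L$ forces the identity for each such pair.

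Next I would read the left-hand side as an integral transform applied to $\varphi^{k,l}$. For fixed $u,v$ the operator $\varphi\mapsto\iint\varphi\,\langle U,\cdot\rangle^2\langle V,\cdot\rangle^2$ is the tensor product of two ``cosine-square'' transforms, one on $G^{u^\perp}(d-1,\bark)$ and one on $G^{v^\perp}(d-1,\barl)$, since the kernel factorizes. By multilinearity of the exterior product the right-hand side is a quadratic form in $A_I$ and, separately, in $A_J$, i.e.\ a biquadratic expression restricted to simple unit multivectors. The key observation is that for a single factor the kernels $\langle U,\cdot\rangle^2=\langle U\otimes U,\cdot\rangle$ already span the whole finite-dimensional space of quadratic forms on simple unit $\bark$-vectors, because $\{U\otimes U:U\text{ simple}\}$ spans $\Lin\{\xi\otimes\xi:\xi\text{ simple}\}$. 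Consequently the tensor transform maps onto all biquadratic forms on simple $(A_I,A_J)$, a space which contains the target, so for each fixed $u,v$ a representing density $\varphi^{k,l}(u,\cdot,v,\cdot)$ exists.

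Finally I would upgrade this pointwise existence to a single function $\varphi^{k,l}$ continuous on $\Forth(d,\bark)\times\Forth(d,\barl)$. The natural device is $O(d)$-equivariance: the construction intertwines the orthogonal group action on $(u,U,v,V)$ with that on $(A_I,A_J)$, so the cosine-square transforms act as scalars on the relevant isotypic components (Schur's lemma), and the equivariant inverse produces a density depending polynomially, hence real-analytically and in particular continuously, on $(u,U,v,V)$. By construction this $\varphi^{k,l}$ is independent of $K$ and $L$. Substituting it back and reversing the first paragraph then yields \eqref{IR1}.

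The main obstacle is exactly this last inversion carried out globally. Two points require care. First, one tests only against simple (decomposable) multivectors, so a representing density is far from unique and the correct function space is the quotient of quadratic forms by those vanishing on simple vectors; one must verify that the kernels still span this quotient, equivalently that the surviving transform eigenvalues are nonzero. Second, the ambient spaces $u^\perp$ and $v^\perp$ vary with the flags, so continuity in $u,v$, including near the degenerate configurations $u=\pm v$ where $\|A_I\wedge u\wedge A_J\wedge v\|=0$, is not automatic and is precisely what the equivariant, polynomial form of $\varphi^{k,l}$ secures.
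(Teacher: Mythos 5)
Your opening reduction is correct and is exactly the paper's: inserting \eqref{ECM} (and its analogue for $L$) into the right-hand side of \eqref{IR1}, using Fubini (legitimate, since $F_{k,l}^{(\ep)}$ is bounded and the measures are finite), and comparing with \eqref{7late} term by term in $I,J$ reduces Theorem \ref{main} to the pointwise integral equation you display --- this is precisely Proposition \ref{P-dint}. The gap is everything after that. In your second paragraph you justify surjectivity of the transform by the remark that $\{U\otimes U:U\ \mathrm{simple}\}$ spans $\Lin\{\xi\otimes\xi:\xi\ \mathrm{simple}\}$; that is a tautology (a set always spans its span) and does not address the real question, which is whether the \emph{integral operator} $\varphi\mapsto\iint\varphi\,\langle U,\cdot\rangle^2\langle V,\cdot\rangle^2\,\nu^{d-1}_{\barl}(dV)\,\nu^{d-1}_{\bark}(dU)$ is invertible on the relevant finite-dimensional space of restrictions --- equivalently, whether the Schur scalars of your third paragraph are all nonzero. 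You concede this yourself in the last paragraph, calling it ``the main obstacle,'' but you never close it. It is not a routine verification; it is the technical core of the paper's proof: Lemmas \ref{L1} and \ref{L2} show that the transform maps the span of the explicitly constructed functions $\varphi^{k,l}_{p,q}$ (sums of squared wedge products over index sets $I,J$ with prescribed intersections with $I_0,J_0$) into itself, with coefficient matrix the Kronecker product $D(d-1,\bark)\otimes D(d-1,\barl)$; and the regularity of $D(d,k)$ (Proposition \ref{regul}) is proved by a genuine argument --- the operator $T$ on $\mathcal{L}=\Lin\{\langle E_I,\cdot\rangle^2\}$ is self-adjoint, and the identity $\alpha_i h_i(E_I)=(h_i,h_i)$ forces every zero-eigenvalue component of $\langle E_I,\cdot\rangle^2$ to vanish, whence $T$ is bijective (Corollary \ref{Cor1}). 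Your positive-definiteness instinct can indeed be turned into such a proof, but as written the existence of $\varphi^{k,l}$ rests on an unverified claim. Since the target $\|A\wedge u\wedge B\wedge v\|^2$ must be exhibited in the image of the transform, not merely in the span of biquadratic forms, the proposal as it stands does not prove the theorem.

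A second, smaller defect concerns continuity. Because you invert fiberwise over $(u,v)$, you must patch solutions across the varying Grassmannians $G^{u^\perp}(d-1,\bark)$, $G^{v^\perp}(d-1,\barl)$, including near $u=\pm v$; your appeal to $O(d)$-equivariance is only a sketch (the stabilizer of the pair $(u,v)$ varies with $\angle(u,v)$, so equivariance alone does not deliver continuity across angles). The paper sidesteps this entirely: since the matrices $D(d-1,\bark)$, $D(d-1,\barl)$ are universal, the coefficients $\alpha_{p,q}$ solving the linear system \eqref{system} are \emph{constants}, independent of the flags, so $\varphi^{k,l}=\sum_{p,q}\alpha_{p,q}\varphi^{k,l}_{p,q}$ as in \eqref{phi} is continuous simply because each $\varphi^{k,l}_{p,q}$ is an explicit, basis-independent polynomial expression in the flags. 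In short: you have the right skeleton, identical to the paper's, but the two load-bearing steps --- the invertibility of the cosine-square transform and the explicit construction that makes continuity automatic --- are missing.
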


Note that Theorem \ref{main} implies
$$
 V_{k,l}(K,L)=\lim_{\ep\searrow 0}\iint F_{k,l}^{(\ep)}(\angle(u,v))\varphi^{k,l}(u,U,v,V)\, \Eam_k(K;d(u,U))\,\Eam_l(L;d(v,V)).
$$

It is natural to ask whether here the limit can be exchanged with the double integral to obtain
\begin{equation} \label{IR2}
V_{k,l}(K,L)=\iint F_{k,l}(\angle(u,v))\varphi^{k,l}(u,U,v,V)\, \Eam_k(K;d(u,U))\,\Eam_l(L;d(v,V)).
\end{equation}
Since $\varphi^{k,l}$ is a signed function and $F_{k,l}$ is unbounded, 
the existence of the integral on the right-hand side is not guaranteed in general. 
 In the final section, we show that if $K=L$ is a $2$-dimensional unit square in $\R^4$, then the integral 
 on the right-hand side of \eqref{IR2} does not exist. However, 
 equation \eqref{IR2} holds under additional assumptions on $K$ and $L$, which (intuitively speaking) 
exclude parallel segments in the boundaries of $K$ and $L$. 

 It seems to be appropriate to include a more detailed comparison of the formulas \eqref{IR} and \eqref{IR2} 
 (provided the latter holds).  
Both formulas relate mixed volumes of two convex bodies $K,L$ to integrals over product spaces. In the case of 
formula \eqref{IR}, the integration extends over the cartesian product of the normal bundles of $K$ and $L$ 
and is carried out with respect to the product of the corresponding Hausdorff measures. In contrast, 
the domain of integration in \eqref{IR2} is independent of $K$ and $L$ and  
a product of flag manifolds. Here the integration is carried out with respect to the product of suitable 
(nonnegative and translation invariant) flag measures of $K$ and $L$.  The integrand on the right-hand side 
of \eqref{IR2} is the product of a  signed function $\varphi^{k,l}(u,U,v,V)$ of the flags $(u,U)$ and $(v,V)$ from the corresponding flag manifolds 
and an unbounded, nonnegative function $F_{k,l}(\angle(u,v))$. In particular, the integrand is independent of $K$ and $L$. 
 On the other hand, the integrand on the right-hand side 
of \eqref{IR} involves  the generalized curvatures of $K$ and $L$. Due to the 
factor $\left\|A_I(K;x,u)\wedge u\wedge A_J(L;y,v)\wedge v\right\|^2$, the double sum under the integral does not factorize, in general. However, since 
the generalized curvature functions are nonnegative, the integral always exists. 

A similar comparison can be given for \eqref{7late} and \eqref{IR1}. In this case, the use of the $\epsilon$-approximation $F_{k,l}^{(\epsilon)}$  ensures that the integral  in \eqref{IR1} exists.

The following theorem states that equation \eqref{IR2} is satisfied, for instance, if at least 
one of the two convex bodies $K$ or $L$ is  ``randomly rotated and/or reflected''. 
Here a random rotation and/or reflection refers to the (unique) invariant probability measure $\nu_d$ on the 
orthogonal group $O(d)$.  

Another condition which ensures that \eqref{IR2} holds is that $K$ and $L$ are 
convex polytopes in general relative position. To define this notion, let $\mathcal{F}_k(K)$ 
denote the set of $k$-dimensional faces of a convex polytope $K$, and let $L(F)$ denote the 
linear subspace parallel to $F\in \mathcal{F}_k(K)$. 
Then we say that convex polytopes $K,L\subset\rd$ 
are in general relative position if $L(F)\cap L(G)=\{o\}$ whenever $F\in \mathcal{F}_k(K)$, $G\in \mathcal{F}_l(L)$  and $k,l\in\{1,\ldots,d-1\}$ with $k+l=d$.

Moreover, we show that \eqref{IR2} holds if the support function of one of the convex bodies $K,L$ 
is of class $C^{1,1}$ (differentiable and the gradient is a 1-Lipschitz map). In this case, the corresponding convex body is strictly convex. 
The following lemma summarizes equivalent conditions for a convex body $K$ to have a support function of class $C^{1,1}$. 
Here we say that $K$ rolls freely (equivalently, slides freely) inside a ball if there is a Euclidean ball 
$B$ such that for each $x\in \partial B$  there is a translation vector $t\in\R^d$ such that $x\in K+t\subset B$. Also, 
$K$ is a summand of a ball  if there is a Euclidean ball $B$ and a convex body $M$ such that $K+M=B$. 

\begin{Lemma}\label{newlem}
Let $K$ be a convex body in $\R^d$. Then the following conditions are equivalent.
\begin{enumerate}
\item[{\rm (a)}] The support function $h_K$ is of class $C^{1,1}$.
\item[{\rm (b)}] The first area measure $S_1(K,\cdot)$ of $K$ is absolutely continuous with bounded density  with respect to spherical Lebesgue measure.
\item[{\rm (c)}] $K$ rolls freely (slides freely) inside a ball.
\item[{\rm (d)}] $K$ is a summand of a ball.
\end{enumerate}
\end{Lemma}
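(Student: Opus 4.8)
The plan is to use condition (a) as a hub and to establish the three equivalences (a)$\,\Leftrightarrow\,$(d), (a)$\,\Leftrightarrow\,$(b) and (c)$\,\Leftrightarrow\,$(d); the last of these is purely geometric and can be quoted. Throughout I work with the support function restricted to $S^{d-1}$ and with its second-order structure, namely the symmetric operator on the tangent spaces of $S^{d-1}$ which I write $D^2h_K+h_K\,\mathrm{id}$, whose eigenvalues are the generalized principal radii of curvature of $K$ and which is positive semidefinite precisely because $K$ is convex.

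First I would settle (a)$\,\Leftrightarrow\,$(d). The support function of the ball $rB^d$ centred at the origin is $h_{rB^d}(x)=r\|x\|$, and $K$ is a summand of $rB^d$ exactly when $h_{rB^d}-h_K$ is again a support function, i.e.\ when its positively homogeneous extension is convex. Since $h_{rB^d}$ has all radii of curvature equal to $r$, this convexity is equivalent to the operator inequality $D^2h_K+h_K\,\mathrm{id}\le r\,\mathrm{id}$, that is, to all radii of curvature of $K$ being bounded above by $r$. On the other hand, $\nabla h_K\colon S^{d-1}\to\partial K$ is the reverse spherical image map, and its intrinsic differential is exactly $D^2h_K+h_K\,\mathrm{id}$; hence $\nabla h_K$ is Lipschitz with constant $r$ if and only if these radii are bounded by $r$. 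Thus the existence of a ball of which $K$ is a summand is equivalent to $\nabla h_K$ being Lipschitz, i.e.\ to $h_K\in C^{1,1}$, the Lipschitz constant matching the radius of the ball. The asserted strict convexity is a byproduct: bounded radii of curvature exclude flat boundary pieces in the reverse spherical image.

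Next, (a)$\,\Leftrightarrow\,$(b). By the discussion following \eqref{ECM}, or classically, the first area measure $S_1(K,\cdot)$ is, up to a dimensional constant, the trace of the matrix-valued surface measure $D^2h_K+h_K\,\mathrm{id}$ on $S^{d-1}$; equivalently, its density against spherical Lebesgue measure is the sum of the generalized principal radii of curvature. For (a)$\,\Rightarrow\,$(b): if $h_K\in C^{1,1}$ then, by the theorems of Rademacher and Alexandrov, $D^2h_K$ exists almost everywhere and is bounded, so the trace is bounded and $S_1(K,\cdot)$ is absolutely continuous with bounded density. The reverse implication (b)$\,\Rightarrow\,$(a) is the \emph{main obstacle}, and is where convexity must be exploited at the level of measures: convexity of $h_K$ gives $D^2h_K+h_K\,\mathrm{id}\ge0$ as a matrix-valued measure, and for a positive semidefinite matrix each entry is dominated by the trace (by Cauchy--Schwarz, $|M_{ij}|\le\sqrt{M_{ii}M_{jj}}\le\operatorname{tr}M$); hence if the trace measure, which is $S_1(K,\cdot)$, is absolutely continuous with bounded density, then so is the entire matrix-valued measure $D^2h_K+h_K\,\mathrm{id}$. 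A bounded density of this full second-order measure says exactly that $\nabla h_K$ is Lipschitz, i.e.\ $h_K\in C^{1,1}$. The point to be careful about is that a bound on the trace alone would, without the sign condition, control only the spherical Laplacian of $h_K$ and so yield $C^{1,\alpha}$ for all $\alpha<1$ but not $C^{1,1}$; it is the positive semidefiniteness coming from convexity that upgrades the trace bound to a genuine Hessian bound and rules out any singular part.

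Finally, (c)$\,\Leftrightarrow\,$(d) is the classical fact that a convex body slides freely inside a convex body $B$ if and only if it is a summand of $B$ (see \cite{S}); applied with $B$ a ball this gives the equivalence of (c) and (d) verbatim, the rolling and sliding formulations coinciding for a ball by symmetry. Chaining (b)$\,\Leftrightarrow\,$(a)$\,\Leftrightarrow\,$(d)$\,\Leftrightarrow\,$(c) then completes the proof.
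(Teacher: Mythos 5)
Your proof is correct, but it takes a genuinely different route from the paper, whose ``proof'' is purely a matter of assembly from the literature: the paper establishes (b)$\,\Leftrightarrow\,$(d) by citing Theorem 4.7 of \cite{Weil73} (with \cite[Theorem 1]{Weil82} as a more general reference), (c)$\,\Leftrightarrow\,$(d) by citing \cite[Theorem 3.2.2]{S}, and (a)$\,\Leftrightarrow\,$(b) by citing Proposition 2.3 of \cite{Howard} --- no argument is written out at all, and in particular the paper never touches (a)$\,\Leftrightarrow\,$(d) directly, reaching it only through the chain (a)$\,\Leftrightarrow\,$(b)$\,\Leftrightarrow\,$(d). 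You instead use (a) as the hub and supply actual mathematics: for (a)$\,\Leftrightarrow\,$(d) the characterization of summands via sublinearity of $r\|\cdot\|-h_K$, translated into the two-sided tangential bound $0\le D^2h_K+h_K\,\mathrm{id}\le r\,\mathrm{id}$ and hence into Lipschitz continuity of the reverse spherical image map; and for (b)$\,\Rightarrow\,$(a) the key observation that convexity makes $D^2h_K+h_K\,\mathrm{id}$ a \emph{positive semidefinite} matrix-valued measure, so that by Cauchy--Schwarz every entry is dominated by the trace, which is (up to normalization) exactly what hypothesis (b) controls; your remark that a trace (i.e.\ spherical Laplacian) bound alone would only give $C^{1,\alpha}$, and that positive semidefiniteness is what upgrades it to $C^{1,1}$, correctly isolates the essential point behind the cited results. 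What your approach buys is a self-contained, conceptually transparent proof that also identifies the Lipschitz constant of $\nabla h_K$ with the radius of the enclosing ball; what the paper's approach buys is brevity and the delegation of measure-theoretic technicalities that your sketch glosses: that for an arbitrary (non-smooth) body the spherical second-order quantity $D^2h_K+h_K\,\mathrm{id}$ is indeed a matrix-valued Radon measure whose trace represents $S_1(K,\cdot)$, that a bounded distributional Hessian yields a genuinely Lipschitz gradient, and the a.e.\ second-order differentiability used in (a)$\,\Rightarrow\,$(b); these are standard (approximation by smooth support functions, Alexandrov-type differentiability) but would have to be written out to make your argument complete at the level of rigor the citations provide.
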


\begin{proof}
The equivalence of (b) and (d) is contained in Theorem 4.7 in \cite{Weil73}. A more general result is 
provided in \cite[Theorem 1]{Weil82}. 

The equivalence of (c) and (d) is well known (see \cite[Theorem 3.2.2]{S}). 

The equivalence of (a) and (b) is, e.g., stated as Proposition 2.3 in \cite{Howard}.
\end{proof}

The following theorem provides sufficient conditions for \eqref{IR2} to hold.

\begin{Theorem}\label{thm3}
Let $K,L\subset\rd$  be arbitrary convex bodies in $\rd$, and let 
$k,l\in\{1,\ldots,d-1\}$ with $k+l=d$. 
Then \eqref{IR2} holds 
\begin{enumerate}
\item[{\rm (a)}] for $K$ and $\rho L$, for $\nu_d$-almost all $\rho\in O(d)$;
\item[{\rm (b)}] if the support function of $K$ or $L$ is of class $C^{1,1}$;
\item[{\rm (c)}] if $K$ and $L$ are polytopes in general relative position. 
\end{enumerate}
\end{Theorem}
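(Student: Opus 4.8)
The plan is to obtain Theorem~\ref{thm3} from Theorem~\ref{main} by justifying the passage to the limit $\ep\searrow 0$ under the double integral in \eqref{IR1}. Since \eqref{IR1} holds for every $\ep>0$ and $V_{k,l}^{(\ep)}(K,L)\nearrow V_{k,l}(K,L)$, it suffices, by dominated convergence, to show in each of the three situations that
\[
\iint F_{k,l}(\angle(u,v))\,\bigl|\varphi^{k,l}(u,U,v,V)\bigr|\,\Eam_k(K;d(u,U))\,\Eam_l(L;d(v,V))<\infty ;
\]
then $F_{k,l}^{(\ep)}\varphi^{k,l}\to F_{k,l}\varphi^{k,l}$ pointwise with integrable majorant $F_{k,l}|\varphi^{k,l}|$, and \eqref{IR2} follows. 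As $\varphi^{k,l}$ is continuous on a compact manifold, the sole source of divergence is the blow-up of $F_{k,l}(\theta)$ as $\theta\to\pi$, i.e.\ at antipodal pairs $v=-u$. Using $\bark+\barl=d-2$, the definition of $F_{k,l}$ gives $F_{k,l}(\theta)(\sin\theta)^{d-1}=\theta\,G(\theta)/{\cal H}^{d-1}(S^{d-1})$, where $G(\theta)=\int_0^1(\sin t\theta)^{\bark}(\sin(1-t)\theta)^{\barl}\,dt$ is bounded and continuous on $[0,\pi]$; in particular $F_{k,l}(\theta)=O((\pi-\theta)^{-(d-1)})$.

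The decisive input is the vanishing of $\varphi^{k,l}$ at antipodal flags. In \eqref{IR} the factor $\|A_I\wedge u\wedge A_J\wedge v\|^2$ depends only on the spanned subspaces and vanishes to second order as $v\to-u$: writing $v=-u+w$ with $w\perp u$ and $\|w\|$ of order $\sin\angle(u,v)$, every term containing $-u$ drops out and one is left with a factor of order $\sin^2\angle(u,v)$. I will use that this persists for $\varphi^{k,l}$, namely that there is a constant $C$ with
\[
\bigl|\varphi^{k,l}(u,U,v,V)\bigr|\le C\,\sin^2\angle(u,v).
\]
Granting this, the majorant obeys $F_{k,l}(\angle(u,v))|\varphi^{k,l}|\le C\,F_{k,l}(\angle(u,v))\sin^2\angle(u,v)=:\Phi_0(\angle(u,v))$, which depends on $(u,v)$ only. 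Hence the subspaces integrate out, and by $\Eam_k(K;\cdot\times G(d,\bark))=\binom{d}{k}(d\kappa_{d-k})^{-1}S_k(K,\cdot)$ the required finiteness reduces to
\[
\iint\Phi_0(\angle(u,v))\,S_k(K,du)\,S_l(L,dv)<\infty .
\]
A short computation gives $\Phi_0(\theta)(\sin\theta)^{d-2}=C\,\theta\,G(\theta)\sin\theta/{\cal H}^{d-1}(S^{d-1})$, which is bounded on $[0,\pi]$; thus $\int_0^\pi\Phi_0(\theta)(\sin\theta)^{d-2}\,d\theta<\infty$. This single angular integral drives all three cases. (For $d\le 3$ the function $\Phi_0$ is itself bounded, so \eqref{IR2} then holds with no extra hypothesis.)

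It remains to bound $I:=\iint\Phi_0(\angle(u,v))\,S_k(K,du)\,S_l(L,dv)$, a quantity symmetric in the two bodies since $\Phi_0$ depends only on the angle. In case (c), $S_k(K,\cdot)$ is carried by $\bigcup_{F\in\mathcal{F}_k(K)}N(K,F)\cap S^{d-1}$ and $S_l(L,\cdot)$ by $\bigcup_{G\in\mathcal{F}_l(L)}N(L,G)\cap S^{d-1}$, where $N(\cdot,\cdot)$ is the normal cone. Since $u\in N(K,F)$ forces $u\in L(F)^\perp$ and $v\in N(L,G)$ forces $v\in L(G)^\perp$, an antipodal pair $v=-u$ would put $u$ in $L(F)^\perp\cap L(G)^\perp=(L(F)+L(G))^\perp$; but general relative position together with $k+l=d$ gives $L(F)+L(G)=\rd$, so this intersection is $\{o\}$, excluding $u\in S^{d-1}$. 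As there are finitely many faces, $\angle(u,v)$ stays $\le\pi-\eta$ on the product of the supports, $\Phi_0$ is bounded there, and $I<\infty$. In case (b), taking the $C^{1,1}$ body to be $K$ (by the symmetry of $I$), boundedness of $D^2h_K$—equivalently Lemma~\ref{newlem}—makes every $S_k(K,\cdot)$ absolutely continuous on $S^{d-1}$ with bounded density $s_k$; integrating first in $u$,
\[
\int_{S^{d-1}}\Phi_0(\angle(u,v))\,S_k(K,du)\le\|s_k\|_\infty\,{\cal H}^{d-2}(S^{d-2})\int_0^\pi\Phi_0(\theta)(\sin\theta)^{d-2}\,d\theta
\]
is a finite constant independent of $v$, and integrating in $v$ against the finite measure $S_l(L,\cdot)$ finishes the case. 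In case (a) I would replace $L$ by $\rho L$, use $S_l(\rho L,\cdot{})=S_l(L,\rho^{-1}\cdot{})$, and integrate the bound over $\rho\in O(d)$ against $\nu_d$: by Fubini and the fact that $\rho v$ is uniform on $S^{d-1}$, the inner average $\int_{O(d)}\Phi_0(\angle(u,\rho v))\,\nu_d(d\rho)=c\int_0^\pi\Phi_0(\theta)(\sin\theta)^{d-2}\,d\theta$ is a finite constant independent of $(u,v)$, so $\int_{O(d)}\iint\Phi_0\,S_k(K,du)\,S_l(\rho L,dv)\,\nu_d(d\rho)<\infty$; hence the inner double integral is finite for $\nu_d$-almost every $\rho$, which is exactly (a).

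The step I expect to be the main obstacle is the quadratic estimate $|\varphi^{k,l}(u,U,v,V)|\le C\sin^2\angle(u,v)$. Proving it means following the explicit construction of $\varphi^{k,l}$ in the proof of Theorem~\ref{main} and checking that the second-order vanishing of $\|A_I\wedge u\wedge A_J\wedge v\|^2$ at $v=-u$ is preserved by the signed averaging over $G^{u^\perp}(d-1,\bark)$ and $G^{v^\perp}(d-1,\barl)$ that defines $\varphi^{k,l}$, and is not degraded when this averaging is inverted. Once this quantitative vanishing is secured, the three cases are routine, since the entire analytic difficulty has been concentrated in the single finite integral $\int_0^\pi F_{k,l}(\theta)(\sin\theta)^{d}\,d\theta$.
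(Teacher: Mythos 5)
Your proposal follows the paper's own route essentially step by step: dominated convergence in \eqref{IR1} with a majorant of the form $\mathrm{const}\cdot F_{k,l}(\angle(u,v))\sin^2\angle(u,v)$, reduction from flag measures to area measures via $\Eam_k(K;\cdot\times G(d,\bark))=\binom{d}{k}(d\kappa_{d-k})^{-1}S_k(K,\cdot)$, and then exactly the paper's treatment of the three cases: (a) Fubini over $O(d)$ and rotation covariance of area measures, reducing everything to a finite angular integral; (b) bounded densities of the area measures of the $C^{1,1}$ body; (c) exclusion of antipodal normal pairs by general relative position plus compactness over the finitely many faces. These parts are correct. (A cosmetic difference: the paper uses the cruder bound $F_{k,l}(\theta)\sin^2\theta\le\mathrm{const}\cdot\sin^{3-d}\theta$, which also blows up at $\theta=0$, so in case (c) it must bound the angle away from $0$ as well; your sharper majorant $\Phi_0$ makes that unnecessary.)

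The gap is the estimate $|\varphi^{k,l}(u,U,v,V)|\le C\sin^2\angle(u,v)$, which you assume and flag as the main obstacle, fearing that the quadratic vanishing could be degraded ``when the averaging is inverted.'' That fear is unfounded, and the estimate is a two-line consequence of the construction in Proposition~\ref{P-dint}: the inversion there determines only the scalar coefficients $\alpha_{p,q}$ (a finite linear system with matrix $D(d-1,\bark)\otimes D(d-1,\barl)$), while the function itself is the explicit finite linear combination \eqref{phi}, $\varphi^{k,l}=\sum_{p,q}\alpha_{p,q}\varphi^{k,l}_{p,q}$, and each $\varphi^{k,l}_{p,q}$ is a finite sum of terms
\begin{equation*}
\left\|\textstyle{\bigwedge}_{i\in I}u_i\wedge u\wedge\textstyle{\bigwedge}_{j\in J}v_j\wedge v\right\|^2
=\left\|\textstyle{\bigwedge}_{i\in I}u_i\wedge\textstyle{\bigwedge}_{j\in J}v_j\wedge u\wedge v\right\|^2
\le\|u\wedge v\|^2=\sin^2\angle(u,v),
\end{equation*}
using $\|\xi\wedge\eta\|\le\|\xi\|\,\|\eta\|$ for simple multivectors and the fact that $\bigwedge_{i\in I}u_i$ and $\bigwedge_{j\in J}v_j$ are unit simple vectors. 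This is precisely the argument the paper gives, so your proof becomes complete once these lines are written. One further small repair: in (b) you appeal to Lemma~\ref{newlem} for boundedness of the densities of \emph{all} area measures $S_k(K,\cdot)$, but that lemma concerns only $S_1(K,\cdot)$; you need the stronger result of Weil cited in the paper, or the direct argument that a summand $K$ of a ball $RB^d$ satisfies $S_k(K,\cdot)\le S_k(RB^d,\cdot)=R^k\,\mathcal{H}^{d-1}\llcorner S^{d-1}$ by monotonicity of area measures under Minkowski summands.
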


The proofs of Theorem \ref{main} and Theorem \ref{thm3} will be given in Sections 6 and 7.

\section{Integrals over Grassmannians}\label{IoG}

In order to obtain the representation \eqref{IR1}, we need to connect equations \eqref{7late} and \eqref{ECM}. This requires a series of preparatory results on integrals over Grassmannians which we provide in this section.

Let an integer $k\in \{0,\ldots, d\}$ and a subspace $A\in G(d,k)$ be fixed. 
By $G_0^A(k,j)$ we denote the set of unit simple $j$-vectors in $A$, where $j\in \{0,\ldots, k\}$.  
For integers $r,s$, we put $r\wedge s:=\min\{r,s\}$ (there is no danger of confusing this notation with the 
exterior product). 
Then, for  $i\in\{0,\ldots ,k\wedge (d-k)\}$, we define the linear subspace 
$$T_iA=\Lin \{\xi\wedge\eta :\, \xi\in G_0^A(k,k-i),\eta\in G_0^{A^\perp}(d-k,i)\}$$
of $\bigwedge_k\rd$. In the following,  
we sometimes also consider $A\in G(d,k)$  as an element of $G_0(d,k)$, which implies that 
one of two possible orientations has to be chosen. In such a case, this choice will not affect the construction. 
If $ a_1,\ldots ,a_d$ is an orthonormal basis of 
$\rd$ such that $A=\Lin\{ a_1,\ldots ,a_k\}$, then 
\begin{equation}\label{ONB}
\left\{\textstyle{\bigwedge}_{j\in I}a_j:\, I\subset\{1,\ldots,d\},|I|=k,|I\cap\{ 1,\ldots ,k\}|=k-i\right\}
\end{equation}
is an orthonormal basis of $T_iA$. In particular, we have 
$T_0A=\Lin\{ a_1\wedge\ldots\wedge a_k\}$ and 
$$\dim (T_iA)=\binom{k}{ i}\binom{d-k}{ i}=:d(i,k).$$
Note  that $T_iA\perp T_jA$ if $i\neq j$ and  
\begin{equation} \label{part}
\textstyle{\bigwedge}_k\rd =\bigoplus\limits_{i=0}^{k\wedge (d-k)}T_iA.
\end{equation}

Given two subspaces $A,B\in G(d,k)$, we define the $i$th product of $A$ and $B$ as
$$\langle A,B\rangle_i=\left\| p_{T_iA}B\right\|,$$
where $p_{T_iA}B$ denotes the orthogonal projection of $B$ (that is, of a  simple unit 
$k$-vector $B_0$ corresponding to $B$) onto $T_iA$. 
More explicitly, if $B_0\in G_0(d,k)$ corresponds to $B$ and $\eta_1,\ldots,\eta_{d(i,k)}$ is 
an orthonormal basis of $T_iA$, then 
$$
\|p_{T_iA}B\|^2=\sum_{r=1}^{d
(i,k)}\langle B_0,\eta_r\rangle^2.
$$
In particular, we obtain 
$$\langle A,B\rangle_0=\left\| p_{T_0A}B\right\| =|\langle A,B\rangle|.$$
The expression $|\langle A,B\rangle|$ can be taken as the absolute value of the scalar product 
of  simple unit $k$-vectors corresponding to $A$ and $B$ or as the absolute value of the determinant 
of the orthogonal projection of $A$ onto $B$, which yields the same numerical value.

Furthermore, the $i$th product is symmetric, i.e., we have $\langle A,B\rangle_i=\langle B,A\rangle_i$, as follows from the subsequent lemma. We include a proof, 
since we could not find an explicit reference.  

\begin{Lemma}
Let $k\in \{0,\ldots, d\}$ and $A,B\in G(d,k)$. Then there is an orthogonal map  $\varrho\in O(d)$ such 
that $\varrho A=B$ and $\varrho B=A$.
\end{Lemma}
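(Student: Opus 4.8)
The plan is to reduce the statement to the canonical form of a pair of equidimensional subspaces (the system of principal angles) and to build $\varrho$ block by block. Write $P_A,P_B$ for the orthogonal projections of $\R^d$ onto $A$ and $B$. I would apply the spectral theorem to the self-adjoint, positive semidefinite operator $\varphi:=P_AP_BP_A$ restricted to $A$, whose eigenvalues lie in $[0,1]$. A direct computation shows that the eigenspace for eigenvalue $1$ is $A\cap B$ and the eigenspace for eigenvalue $0$ is $A\cap B^\perp$, while every eigenvalue $\lambda\in(0,1)$ encodes a genuine principal angle $\theta\in(0,\pi/2)$ through $\lambda=\cos^2\theta$.

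For such an eigenvalue $\lambda$ and an orthonormal basis $a_1,\dots,a_m$ of the corresponding eigenspace, I would set $b_i:=\lambda^{-1/2}P_Ba_i\in B$. Using the identity $P_AP_Ba_i=\lambda a_i$ and the self-adjointness of $P_B$, one verifies $\langle b_i,b_j\rangle=\delta_{ij}$ and $\langle a_i,b_j\rangle=\sqrt\lambda\,\delta_{ij}$, so the planes $W_i:=\Lin\{a_i,b_i\}$ are two-dimensional and mutually orthogonal, and are orthogonal to $A\cap B$, to $A\cap B^\perp$, to $B\cap A^\perp$, and to the planes belonging to the other eigenvalues. One also checks that each $b_i$ is an eigenvector of $P_BP_AP_B$ restricted to $B$, which shows that $B$ decomposes in the mirror-symmetric way. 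Altogether this yields the orthogonal decomposition
$$
\R^d=(A\cap B)\oplus\Big(\bigoplus_i W_i\Big)\oplus(A\cap B^\perp)\oplus(B\cap A^\perp)\oplus R,\qquad R:=(A+B)^\perp,
$$
with the compatible splittings $A=(A\cap B)\oplus\bigoplus_i\Lin\{a_i\}\oplus(A\cap B^\perp)$ and $B=(A\cap B)\oplus\bigoplus_i\Lin\{b_i\}\oplus(B\cap A^\perp)$. Comparing these splittings and using $\dim A=\dim B=k$ forces $\dim(A\cap B^\perp)=\dim(B\cap A^\perp)$.

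I would then define $\varrho$ on each summand. On $A\cap B$ and on $R$ let $\varrho$ be the identity. On each plane $W_i$ let $\varrho$ be the reflection in the bisector of the two lines $\Lin\{a_i\}$ and $\Lin\{b_i\}$; this is an orthogonal involution of $W_i$ interchanging them. Finally, since $A\cap B^\perp$ and $B\cap A^\perp$ are orthogonal subspaces of equal dimension, I fix an arbitrary linear isometry $\tau:A\cap B^\perp\to B\cap A^\perp$ and let $\varrho$ act as $\tau$ on $A\cap B^\perp$ and as $\tau^{-1}$ on $B\cap A^\perp$. Being an isometry on each summand and permuting the summands of an orthogonal decomposition, $\varrho$ is orthogonal on $\R^d$, and by construction $\varrho A=(A\cap B)\oplus\bigoplus_i\Lin\{b_i\}\oplus(B\cap A^\perp)=B$ and, symmetrically, $\varrho B=A$.

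The main work lies in establishing the canonical decomposition: verifying that the vectors $b_i$ are orthonormal and that the planes $W_i$ are mutually orthogonal and orthogonal to the remaining summands, so that $A$ and $B$ split as stated. All of these orthogonality relations follow from $P_AP_Ba_i=\lambda a_i$ and $\|P_Ba_i\|^2=\lambda$, which are immediate consequences of the spectral decomposition of $\varphi$. Once they are in place, the block definition of $\varrho$ and the verification that it interchanges $A$ and $B$ are routine.
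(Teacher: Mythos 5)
Your proof is correct, but it takes a genuinely different route from the paper's. You obtain the full canonical (principal-angle) decomposition of the pair $(A,B)$ at once, via the spectral theorem applied to $P_AP_BP_A$ restricted to $A$, and then define $\varrho$ block by block: identity on $A\cap B$ and on $(A+B)^\perp$, a reflection in each principal-angle plane $W_i$, and an isometric swap $\tau\oplus\tau^{-1}$ of the equidimensional, mutually orthogonal pieces $A\cap B^\perp$ and $B\cap A^\perp$. The verifications you defer ($\langle b_i,b_j\rangle=\delta_{ij}$, $\langle a_i,b_j\rangle=\sqrt{\lambda}\,\delta_{ij}$, mutual orthogonality of the $W_i$ and their orthogonality to the remaining summands, and the mirror decomposition of $B$ through $P_BP_AP_B$) are indeed routine consequences of $P_AP_BP_Aa_i=\lambda a_i$, so there is no gap. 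The paper argues instead by induction on $k$: after splitting off $A\cap B$, it chooses a pair $(a_1,b_1)\in(A\cap S^{d-1})\times(B\cap S^{d-1})$ minimizing $\|a-b\|$, uses a first-order variational argument to show that $A\cap a_1^\perp$ and $B\cap b_1^\perp$ both lie in $\Lin\{a_1,b_1\}^\perp$, interchanges $a_1$ and $b_1$ by a reflection in that plane, and invokes the induction hypothesis in the orthogonal complement. In effect, the paper constructs your principal-angle planes one at a time (starting from the smallest principal angle), using only compactness and differentiation, and it never needs the spectral theorem or the explicit isometry $\tau$, since the orthogonal pairs (principal angle $\pi/2$) are handled by the same reflection step as any other pair. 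Your approach buys more structure — the simultaneous canonical form, and the extra observation that $\varrho$ can be chosen to be an involution — while the paper's buys a shorter, more elementary and self-contained induction.
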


\begin{proof}
For the proof, we can assume that $A\cap B=\{0\}$. In fact, otherwise let $L_0=A\cap B$. Then 
we define $\varrho$ as the identity on $L_0$. It then remains to consider  $A\cap L_0^\perp$ 
and $B\cap L_0^\perp$ in $L_0^\perp$, for which we have $(A\cap L_0^\perp)\cap (B\cap L_0^\perp)=\{0\}$. 

The assertion of the lemma with $A\cap B=\{0\}$ is proved by induction with respect to $k\ge 0$. 
For $k=0$ there is nothing to show. If $k=1$, let $A=\text{Lin}\{a\}$ and $B=\text{Lin}\{b\}$ 
with $a,b\in S^{d-1}$. We define $\varrho$ on $L=\text{Lin}\{a,b\}$ as the orthogonal reflection 
which interchanges $a$ and $b$, and on $L^\perp$ as the identity map, which yields the required isometry.

Now assume that $k\ge 2$ and that the assertion is true for all integers smaller than $k$. Clearly, 
there exist $a_1\in A\cap S^{d-1}$ and $b_1\in B\cap S^{d-1}$ such that
$$
\|a_1-b_1\|=\min\{\|a-b\|:a\in A\cap S^{d-1}, b\in B\cap S^{d-1}\}>0.
$$
We put $L=\text{Lin}\{a_1,b_1\}$  and have $\text{dim}(L)=2$. Then, for $a\in A\cap a_1^\perp$ 
and $b\in B\cap b_1^\perp$, it follows that $ \langle a,  b_1\rangle =0$ and $\langle b, a_1\rangle =0$. 

In fact, let $b\in B\cap b_1^\perp\cap S^{d-1}$ be arbitrarily chosen. Then, for $\theta\in (-\pi,\pi)$, 
we have $\cos(\theta) b_1+\sin(\theta) b\in B\cap S^{d-1}$, and therefore
$$
f(\theta)=\|a_1-(\cos(\theta) b_1+\sin(\theta) b)\|^2
$$
attains its minimum for $\theta=0$. Thus $f'(0)=0$, which implies that
$$
0=2\langle a_1-b_1,-b\rangle =-2 \langle a_1, b\rangle,
$$
and this yields the second assertion. The first assertion follows by symmetry. 

Hence,   we have 
$A\cap a_1^\perp,B\cap b_1^\perp\subset L^\perp$ and 
$\text{dim}(A\cap a_1^\perp)=\text{dim}(B\cap b_1^\perp)=k-1$. By induction, 
there is an isometry $\varrho_1$ of $L^\perp$ which interchanges $A\cap a_1^\perp$ and $B\cap b_1^\perp$. 
The induction is completed by defining $\varrho$ on $L^\perp$ as $\varrho_1$ and 
on $L$ as the orthogonal reflection which interchanges $a_1$ and $b_1$.
\end{proof}

In the following two lemmas we evaluate certain integrals over Grassmannians. These lemmas are 
needed in Section \ref{sec:6} to construct the solution of an integral equation.

\begin{Lemma} \label{L1}
There exist positive constants $c_{k,0}^d,\ldots ,c_{k,k\wedge (d-k)}^d$ such that 
for any two subspaces $A,B\in G(d,k)$,
$$\int_{G(d,k)}\langle A,V\rangle^2\langle V,B\rangle^2\, \nu^d_k(dV)=
\sum_{i=0}^{k\wedge (d-k)}c_{k,i}^d\langle A,B\rangle_i^2.$$
\end{Lemma}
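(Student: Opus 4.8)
The plan is to view the integral as the value, at the simple unit $k$-vector $B$, of a quadratic form in $B$ that is invariant under the stabilizer of $A$ in $O(d)$, and then to diagonalize this form against the decomposition \eqref{part} by means of Schur's lemma. Fix $A\in G(d,k)$ and set, for $\omega\in\bigwedge_k\rd$,
$$
Q_A(\omega)=\int_{G(d,k)}\langle A,V\rangle^2\langle V,\omega\rangle^2\,\nu^d_k(dV),
$$
a nonnegative quadratic form on $\bigwedge_k\rd$ whose value at a simple unit $k$-vector $B$ is the left-hand side of the claimed identity. Let $G_A=\{\varrho\in O(d):\varrho A=A\}$ be the stabilizer of the subspace $A$; it is isomorphic to $O(k)\times O(d-k)$, acting block-diagonally on $A\oplus A^\perp$. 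Since $O(d)$ acts by isometries on $\bigwedge_k\rd$ and $\nu^d_k$ is $O(d)$-invariant, the change of variables $V\mapsto\varrho V$ together with $\langle A,\varrho V\rangle^2=\langle\varrho^{-1}A,V\rangle^2=\langle A,V\rangle^2$ for $\varrho\in G_A$ (note $\varrho^{-1}A=\pm A$ as a $k$-vector) shows that $Q_A(\varrho\omega)=Q_A(\omega)$; equivalently, the associated symmetric bilinear form $\Phi_A$ is $G_A$-invariant.

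Next I would determine the $G_A$-module structure of $\bigwedge_k\rd$. By \eqref{part} we have the orthogonal decomposition $\bigwedge_k\rd=\bigoplus_{i=0}^{k\wedge(d-k)}T_iA$ into $G_A$-submodules, and $T_iA$ is canonically isomorphic, as an $O(k)\times O(d-k)$-module, to the (external) tensor product $\Lambda^{k-i}(A)\otimes\Lambda^{i}(A^\perp)$. The representation-theoretic input I would use is that each exterior power $\Lambda^j$ of the standard module is absolutely irreducible under the full orthogonal group, and that $\Lambda^a\R^k$ and $\Lambda^b\R^k$ are isomorphic as $O(k)$-modules only for $a=b$. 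Indeed, while $\Lambda^a$ and $\Lambda^{k-a}$ coincide as $SO(k)$-modules, they are separated by the determinant character of $O(k)$, as one sees by computing the trace of a reflection. Hence already the $O(k)$-factors $\Lambda^{k-i}(A)$ distinguish the summands, so the $T_iA$ are pairwise non-isomorphic absolutely irreducible $G_A$-modules.

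With this structure established, the real form of Schur's lemma (applicable because the $T_iA$ are of real type) forces every $G_A$-invariant symmetric bilinear form on $\bigwedge_k\rd$ to be block-diagonal, acting as a scalar on each $T_iA$; thus $Q_A(\omega)=\sum_i c_{k,i}^d(A)\,\|p_{T_iA}\omega\|^2$ for suitable constants $c_{k,i}^d(A)$. Evaluating at a simple unit $k$-vector $B$ and using $\langle A,B\rangle_i=\|p_{T_iA}B\|$ yields the asserted identity for this particular $A$. To remove the dependence on $A$, I would invoke the $O(d)$-covariance $Q_{\varrho A}(\varrho\omega)=Q_A(\omega)$ and $\langle\varrho A,\varrho B\rangle_i=\langle A,B\rangle_i$ together with the transitivity of $O(d)$ on $G(d,k)$. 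Concretely, for an orthonormal basis $a_1,\dots,a_k$ of $A$ and $a_{k+1},\dots,a_d$ of $A^\perp$, the simple unit $k$-vector $B_i=a_1\wedge\cdots\wedge a_{k-i}\wedge a_{k+1}\wedge\cdots\wedge a_{k+i}$ lies in $T_iA$ and satisfies $\langle A,B_i\rangle_j=\delta_{ij}$, so $c_{k,i}^d=Q_A(B_i)$ is manifestly independent of $A$. Positivity is then immediate: the integrand $\langle A,V\rangle^2\langle V,B_i\rangle^2$ is nonnegative and vanishes only on the union of the two $\nu^d_k$-null sets where the nontrivial functions $V\mapsto\langle A,V\rangle$ and $V\mapsto\langle V,B_i\rangle$ vanish, whence $c_{k,i}^d=Q_A(B_i)>0$.

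The step I expect to be the main obstacle is the representation-theoretic core, namely verifying that the $T_iA$ are \emph{pairwise non-isomorphic} absolutely irreducible $G_A$-modules: this is precisely what annihilates the cross terms and forces the answer to be a diagonal combination of the $\langle A,B\rangle_i^2$ rather than a general quadratic form in the projections $p_{T_iA}B$. It is here that the reflections contained in $G_A$ are essential: they separate $\Lambda^{k-i}(A)$ from $\Lambda^{i}(A)$ through the determinant character of $O(k)$, a distinction invisible to the rotation subgroup alone, for which these two exterior powers are isomorphic and cross terms could in principle survive. One must also take a little care to use the real (rather than complex) version of Schur's lemma, which is legitimate exactly because these exterior-power modules are absolutely irreducible.
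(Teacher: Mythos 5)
Your proof is correct, but it takes a genuinely different route from the paper's. The paper argues entirely by hand: it expands $\langle V,B\rangle^2$ in the orthonormal basis \eqref{ONB} adapted to the decomposition \eqref{part}, kills all cross terms (Claim 1) by applying the specific sign-change isometries $\varrho(a_\iota)=-a_\iota$, $\varrho(a_l)=a_l$ for $l\neq\iota$, and then shows by further invariance arguments (Claim 2) that the surviving diagonal integrals depend neither on the unit $k$-vector $\xi_i\in T_iA$ nor on $A$; this is elementary and self-contained, using nothing beyond the $O(d)$-invariance of $\bar{\nu}^d_k$. You instead recognize the integral as a $G_A$-invariant quadratic form, identify $T_iA\cong\Lambda^{k-i}(A)\otimes\Lambda^{i}(A^\perp)$ as pairwise non-isomorphic, absolutely irreducible $O(k)\times O(d-k)$-modules (the only delicate pair being the complementary exterior powers with $i+j=k$, which you correctly separate via the determinant character, i.e.\ the trace of a reflection), and invoke the real Schur lemma to force the form to be a scalar on each block with vanishing cross pairings. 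What your approach buys is conceptual transparency and generality: it shows the diagonal shape $\sum_i c_{k,i}^d\langle A,B\rangle_i^2$ is forced for \emph{any} $G_A$-invariant kernel, not just $\langle A,V\rangle^2$, and you also make the positivity $c_{k,i}^d>0$ explicit (via the test vectors $B_i$ and the fact that the zero sets of the nontrivial real-analytic functions $V\mapsto\langle A,V\rangle$ and $V\mapsto\langle V,B_i\rangle$ are $\nu^d_k$-null), a point the paper's proof leaves implicit. The cost is reliance on standard but nontrivial representation theory (irreducibility of exterior powers under the full orthogonal group, the isomorphism criterion for external tensor products, and the real form of Schur's lemma), all of which the paper's sign-flip computation --- in effect a hands-on substitute for Schur's lemma applied to individual basis multivectors --- avoids entirely; your use of $O(d)$-covariance and explicit test vectors to pin down and de-localize the constants closely parallels the paper's Claim 2.
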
 

\begin{proof}
Since the case $k\in\{0,d\}$ is trivial, we assume that $k\in\{1,\ldots,d-1\}$ subsequently. 
For the subspace $A\in G(d,k)$, we choose an orthonormal basis 
 $a_1,\ldots,a_d$ of $\rd$ such that $A=\Lin\{a_1,\ldots,a_k\}$ (as at the beginning of this section).  
In the above formula, we can equivalently represent the subspaces $A,B$ by 
elements of $G_0(d,k)$ and integrate with respect to the $O(d)$ invariant probability measure $\bar{\nu}_k^d$ on $G_0(d,k)$.
Further, due to \eqref{part}, we can write $B=\sum_ip_{T_iA}B$, where here and in the following 
all sums over $i$ will run from $0$ to $k\wedge (d-k)$. Let $\bar\xi_i\in \bigwedge_k\R^d$ be a 
unit $k$-vector such that $p_{T_iA}B=\|p_{T_iA}B\|\cdot \bar\xi_i$. 
Then we obtain 
\begin{equation}\label{expanddistr}
\langle V,B\rangle^2=\sum_i\left\| p_{T_iA}B\right\|^2\langle V,\bar\xi_i\rangle^2
+\sum_{i\neq j}\left\| p_{T_iA}B\right\|\, \left\| p_{T_jA}B\right\|\, \langle V,
\bar\xi_i\rangle \, \langle V,\bar\xi_j\rangle.
\end{equation}

{\bf Claim 1:} Let $\xi_i\in T_iA$ and $\xi_j\in T_jA$. If $i\neq j$ or if $i=j$ and $\xi_i,\xi_j$  are different 
elements of the orthonormal basis \eqref{ONB}, then
$$
\int_{G_0(d,k)}\langle A,V\rangle^2\langle V,\xi_i\rangle\langle V,\xi_j\rangle \, \bar\nu^d_k(dV)=0.
$$
Since the above integral is linear in $\xi_i,\xi_j$, it is sufficient to consider the case where the multivectors 
$\xi_i,\xi_j$ are different simple $k$-vectors from the bases described in \eqref{ONB}. 
In this case, there are sets $I,J\subset\{1,\ldots,d\}$ 
with $|I|=|J|=k$, $|I\cap\{1,\ldots,k\}|=k-i$, $|J\cap\{1,\ldots,k\}|=k-j$ such that 
$$
\xi_i=\textstyle{\bigwedge}_{l\in I}a_l,\qquad \xi_j=\textstyle{\bigwedge}_{l\in J}a_l.
$$
Since $I\neq J$, we can fix an index  $\iota\in (I\setminus J)\cup(J\setminus I)$. Let 
$\varrho\in O(d)$ be defined by $\varrho(a_\iota)=-a_\iota$ and $\varrho(a_l)=a_l$ for $l\neq \iota$. The $O(d)$ invariance of 
$\bar\nu^d_k$ then implies that
\begin{align*}
&\int_{G_0(d,k)}\langle A,V\rangle^2\langle V,\xi_i\rangle\langle V,\xi_j\rangle \, \bar\nu^d_k(dV)\\
&\qquad =\int_{G_0(d,k)}\langle A,\varrho^{-1}V\rangle^2\langle \varrho^{-1}V,\xi_i\rangle\langle \varrho^{-1}V,\xi_j\rangle \, \bar\nu^d_k(dV)\\
&\qquad =\int_{G_0(d,k)}\langle \varrho A, V\rangle^2\langle  V,\varrho\xi_i\rangle\langle V,\varrho\xi_j\rangle \, \bar\nu^d_k(dV)\\
&\qquad =-\int_{G_0(d,k)}\langle A,V\rangle^2\langle V,\xi_i\rangle\langle V,\xi_j\rangle \, \bar\nu^d_k(dV),
\end{align*}
since $\langle \varrho A, V\rangle^2=\langle A,V\rangle^2$ and 
$\langle  V,\varrho\xi_i\rangle\langle V,\varrho\xi_j\rangle=
-\langle V,\xi_i\rangle\langle V,\xi_j\rangle$. 
This establishes Claim 1.

From Claim 1 and \eqref{expanddistr} we now conclude that 
$$
\int_{G_0(d,k)}\langle A,V\rangle^2\langle V,B\rangle^2\, \bar\nu^d_k(dV)
 =\sum_i \langle A,B\rangle_i^2\int_{G_0(d,k)}\langle A,V\rangle^2\langle V,\bar\xi_i\rangle^2\, \bar\nu^d_k(dV).
$$
The assertion of the lemma follows immediately from this and the subsequent claim.

{\bf Claim 2:} The integral 
$$
\int_{G_0(d,k)}\langle A,V\rangle^2\langle V,\xi_i\rangle^2\, \bar{\nu}_k^d(dV)
$$
is independent of $A\in G_0(d,k)$ and  $\xi_i\in T_iA$ with $\|\xi_i\|=1$, 
and is therefore a constant $c^d_{k,i}$. To prove this, it is sufficient to show that the integral is independent of $\xi_i\in T_iA$. The independence 
of $A$ then follows from the $O(d)$ invariance of $\bar\nu^d_k$. Since 
$\xi_i\in T_iA$ and $\|\xi_i\|=1$, there are $\alpha_1,\ldots,\alpha_{d(i,k)}\in \R$ and $k$-vectors $\eta_1,\ldots,\eta_{d(i,k)}\in T_iA$ of the orthonormal basis \eqref{ONB} such that 
$$
\xi_i=\alpha_1\eta_1+\cdots +\alpha_{d(i,k)}\eta_{d(i,k)}
\qquad\text{and}\qquad\alpha_1^2+\ldots+\alpha_{d(i,k)}^2=1.
$$ Since
$$
\langle V,\xi_i\rangle^2=\sum_{l=1}^{d(i,k)}\alpha_l^2\langle V,\eta_l\rangle^2+\sum_{r\neq s}\alpha_r\alpha_s\langle V,\eta_r\rangle 
\langle V,\eta_s\rangle,
$$
it follows from Claim 1 that 
$$\int_{G_0(d,k)} \langle A,V\rangle^2\langle V,\xi_i\rangle^2\,  \bar{\nu}_k^d(dV)=\sum_{l=1}^{d(i,k)}\alpha_l^2\int_{G_0(d,k)}\langle A,V\rangle^2\langle V,\eta_l\rangle^2\, \bar{\nu}_k^d(dV).
$$
Finally, we observe that 
$$
\int_{G_0(d,k)}\langle A,V\rangle^2\langle V,\eta_r\rangle^2\, \bar{\nu}_k^d(dV)
$$
is independent of $r\in\{1,\ldots,d(i,k)\}$. To see this, let $r,s\in\{1,\ldots,d(i,k)\}$. Since $\eta_r,\eta_s\in T_iA$, there is 
some $\varrho\in O(d)$ with $\varrho A=A$ and $\varrho\eta_r=\eta_s$. Then the assertion follows again from the $O(d)$ invariance 
of $\bar\nu^d_k$, since 
\begin{align*}
\int_{G_0(d,k)}\langle A,V\rangle^2\langle V,\eta_r\rangle^2\, \bar{\nu}_k^d(dV)
& =\int_{G_0(d,k)}\langle A,\varrho^{-1}V\rangle^2\langle \varrho^{-1}V,\eta_r\rangle^2\, \bar{\nu}_k^d(dV)\\
& =\int_{G_0(d,k)}\langle \varrho A,V\rangle^2\langle V,\varrho\eta_r\rangle^2\, \bar{\nu}_k^d(dV)\\
& =\int_{G_0(d,k)}\langle  A,V\rangle^2\langle V,\eta_s\rangle^2\, \bar{\nu}_k^d(dV).
\end{align*} 
This completes the proof of Claim 2.
\end{proof}

\begin{Lemma} \label{L2}
There exist positive constants $d_{i,j}^{d,k}$, $i,j=0,\ldots ,k\wedge (d-k)$, such that for any two subspaces $A,B\in G(d,k)$,
$$\int_{G(d,k)}\langle A,V\rangle_i^2\langle V,B\rangle^2\, \nu^d_k(dV)=\sum_{j=0}^{k\wedge (d-k)}d_{i,j}^{d,k}\langle A,B\rangle_j^2.$$
\end{Lemma}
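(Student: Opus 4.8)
The plan is to follow the proof of Lemma \ref{L1} almost verbatim, with the factor $\langle A,V\rangle^2$ systematically replaced by
$$\langle A,V\rangle_i^2=\|p_{T_iA}V\|^2=\sum_{r=1}^{d(i,k)}\langle V,\eta_r\rangle^2,$$
where $\eta_1,\ldots,\eta_{d(i,k)}$ is the orthonormal basis of $T_iA$ from \eqref{ONB}. As before, I fix an orthonormal basis $a_1,\ldots,a_d$ of $\rd$ with $A=\Lin\{a_1,\ldots,a_k\}$, represent $A,B$ by elements of $G_0(d,k)$, and integrate against $\bar\nu^d_k$. Writing $B=\sum_j p_{T_jA}B$ via \eqref{part} and expanding $\langle V,B\rangle^2$ as in \eqref{expanddistr}, the integrand becomes a sum of terms of the form $\|p_{T_iA}V\|^2\langle V,\bar\xi_j\rangle\langle V,\bar\xi_{j'}\rangle$, where $\bar\xi_j$ denotes the unit vector along $p_{T_jA}B$.

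The key observation, which is what makes the argument of Claim 1 carry over, is that $\|p_{T_iA}V\|^2$ is invariant under every $\varrho\in O(d)$ satisfying $\varrho A=A$: such a map also satisfies $\varrho A^\perp=A^\perp$, hence maps $T_iA$ onto itself, so $\|p_{T_iA}(\varrho^{-1}V)\|=\|p_{T_iA}V\|$. In particular, the sign-reversal maps $\varrho(a_\iota)=-a_\iota$, $\varrho(a_l)=a_l$ for $l\neq\iota$, used in Claim 1 fix $A$ as a subspace (whether $\iota\le k$ or $\iota>k$) and thus leave the new weight undisturbed. Expanding each $\bar\xi_j$ in the orthonormal basis of $T_jA$ from \eqref{ONB} and invoking this invariance exactly as in Claim 1, I obtain
$$\int_{G_0(d,k)}\|p_{T_iA}V\|^2\langle V,\zeta\rangle\langle V,\zeta'\rangle\,\bar\nu^d_k(dV)=0$$
for any two distinct basis multivectors $\zeta,\zeta'$ from \eqref{ONB}. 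Consequently all cross terms vanish, both those with $j\neq j'$ and the off-diagonal ones within a fixed block $T_jA$, and only the diagonal contributions survive.

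What remains is $\sum_j\|p_{T_jA}B\|^2$ times the integral $\int_{G_0(d,k)}\|p_{T_iA}V\|^2\langle V,\zeta\rangle^2\,\bar\nu^d_k(dV)$ for a basis element $\zeta\in T_jA$, where I have used that the expansion coefficients of the unit vector $\bar\xi_j$ square-sum to $1$. By the argument of Claim 2, this integral is independent of the choice of basis element $\zeta\in T_jA$ and of $A$: the relevant $\varrho\in O(d)$ fixes $A$ and permutes the basis elements of $T_jA$, and by the same observation as above preserves $\|p_{T_iA}V\|^2$. Denoting the common value by $d_{i,j}^{d,k}$ and using $\|p_{T_jA}B\|^2=\langle A,B\rangle_j^2$ yields the asserted formula. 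Finally, $d_{i,j}^{d,k}>0$ because the integrand is nonnegative and continuous and is strictly positive off a set of $\bar\nu^d_k$-measure zero; indeed $T_iA\neq\{0\}$ forces $\|p_{T_iA}V\|^2\not\equiv 0$ (e.g. $V=\eta_r$ gives value $1$), and $\langle V,\zeta\rangle^2>0$ for $\bar\nu^d_k$-almost all $V$.

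The only point requiring genuine care beyond Lemma \ref{L1} is the $\varrho$-invariance of the weight $\|p_{T_iA}V\|^2$ under all the orthogonal maps employed; once that is verified, the combinatorial bookkeeping is identical to the previous lemma, so I expect no further obstacle.
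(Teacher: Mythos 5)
Your proof is correct, but it is organized quite differently from the paper's. The paper deduces Lemma \ref{L2} from Lemma \ref{L1} used as a black box: it expands the weight on the $A$-side, $\langle A,V\rangle_i^2=\sum\langle A_I,V\rangle^2$ (sum over $|I|=k$, $|I\cap I_0|=k-i$), applies Lemma \ref{L1} to each term $\langle A_I,V\rangle^2\langle V,B\rangle^2$, and then regroups the resulting double sum combinatorially; no invariance argument appears in that proof at all, and the payoff is the explicit formula \eqref{combrel} expressing $d_{i,j}^{d,k}$ in terms of the constants $c^d_{k,m}$, from which positivity is immediate. You instead re-run the invariance proof of Lemma \ref{L1} with $\langle A,V\rangle^2$ replaced by $\langle A,V\rangle_i^2$, the one genuinely new ingredient being the (correct) observation that $\|p_{T_iA}V\|^2$ is invariant under every $\varrho\in O(d)$ that stabilizes $A$ as a subspace, since such $\varrho$ maps $T_iA$ onto itself and commutes with $p_{T_iA}$; in particular the sign flips of Claim 1 and the basis permutations of Claim 2 leave the weight undisturbed, so all cross terms still vanish and the diagonal integrals are still constant. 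Your route is a true alternative: it is self-contained and subsumes Lemma \ref{L1} as the special case $i=0$, since $\langle A,V\rangle_0^2=\langle A,V\rangle^2$. What it gives up is formula \eqref{combrel}: your constants are defined only as integrals $\int\|p_{T_iA}V\|^2\langle V,\eta\rangle^2\,\bar\nu^d_k(dV)$ with $\eta$ a basis element of $T_jA$, whereas the paper's combinatorial expression is exactly what is used later, in the Example section, to compute $D(3,1)$ from $c^3_{1,0}$ and $c^3_{1,1}$. Two small points you should tighten: the assertion that $\langle V,\zeta\rangle^2>0$ for $\bar\nu^d_k$-almost all $V$ deserves a line of justification (the exceptional set is $\{V: V\cap Z^\perp\neq\{0\}\}$ for the $k$-subspace $Z$ associated with $\zeta$, which is a proper closed subvariety of the Grassmannian, hence null), and for positivity of $d_{i,j}^{d,k}$ you should pair this with almost everywhere positivity of the other factor as well, which follows from $\|p_{T_iA}V\|^2\ge\langle V,\eta_1\rangle^2$ for a fixed basis element $\eta_1\in T_iA$, rather than the weaker statement that $\|p_{T_iA}V\|^2$ is not identically zero.
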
 

\begin{proof} Since the case $k\in\{0,d\}$ is trivial, we assume that $k\in\{1,\ldots,d-1\}$ in the following. 
Let $A,B\in G(d,k)$ be two linear subspaces. 
We fix an orthonormal basis $ a_1,\ldots ,a_d$  of $\rd$ such that $A=\Lin\{ a_1,\ldots ,a_k\}$, and put $I_0=\{ 1,\ldots ,k\}$. Then
$$\langle A,V\rangle_i^2=\sum_{\substack{|I|=k\\|I\cap I_0|=k-i}}\langle A_I,V\rangle^2,$$
where $A_I=\Lin\{ a_i:\, i\in I\}$ and $V\in G(d,k)$. Thus, Lemma \ref{L1} implies that
\begin{align*}
&\int_{G(d,k)}\langle A,V\rangle_i^2\langle V,B\rangle^2\,{\nu}_k^d(dV)\\
&\qquad=
\sum_{\substack{|I|=k\\|I\cap I_0|=k-i}} \int_{G(d,k)}\langle A_I,V\rangle^2\langle V,B\rangle^2\, {\nu}_k^d(dV)\\
&\qquad=\sum_{\substack{|I|=k\\|I\cap I_0|=k-i}} \sum_{m=0}^{k\wedge (d-k)}c^d_{k,m}\langle A_I,B\rangle_m^2\\
&\qquad=\sum_{\substack{|I|=k\\|I\cap I_0|=k-i}} \sum_{m=0}^{k\wedge (d-k)}c^d_{k,m} \sum_{\substack{|J|=k\\|J\cap I|=k-m}}\langle A_J,B\rangle^2\\
&\qquad=\sum_{\substack{|I|=k\\|I\cap I_0|=k-i}} \sum_{m=0}^{k\wedge (d-k)}\sum_{j=0}^{k\wedge (d-k)}
\sum_{\substack{|J|=k\\|J\cap I|=k-m\\|J\cap I_0|=k-j}}
c^d_{k,m}  \langle A_J,B\rangle^2\\
&\qquad= \sum_{j=0}^{k\wedge (d-k)}\sum_{\substack{|J|=k\\|J\cap I_0|=k-j}}\langle A_J,B\rangle^2 \sum_{m=0}^{k\wedge (d-k)}c^d_{k,m} 
\sum_{\substack{|I|=k\\|I\cap I_0|=k-i\\|I\cap J|=k-m}}1.
\end{align*}
Since the cardinality of the set of all $I\subset\{1,\ldots,d\}$ with $|I|=k$, $|I\cap I_0|=k-i$ and $|I\cap J|=k-m$ does 
not depend on the particular choice of the index set $J$ satisfying $|J\cap I_0|=k-j$, we define 
\begin{equation}\label{combrel}
d_{i,j}^{d,k}=\sum_{m=0}^{k\wedge (d-k)}c^d_{k,m}\cdot\card\{ I:\, |I|=k,|I\cap I_0|=k-i,|I\cap J|=k-m\}.
\end{equation}
Then we obtain 
\begin{align*}
\int_{G(d,k)}\langle A,V\rangle_i^2\langle V,B\rangle^2\,{\nu}_k^d(dV)
&=\sum_{j=0}^{k\wedge (d-k)} d_{i,j}^{d,k}\sum_{\substack{|J|=k\\|J\cap I_0|=k-j}}\langle A_J,B\rangle^2\\
&=\sum_{j=0}^{k\wedge (d-k)}d_{i,j}^{d,k}\langle A,B\rangle_j^2,
\end{align*}
which completes the proof.
\end{proof}

\bigskip

\noindent
{{\bf Remark.}} The cardinality on the right-hand side of equation \eqref{combrel}  
can be expressed explicitly in terms of binomial coefficients. Introducing the variable $l=|I\cap I_0\cap J|$, we get
$$
d_{i,j}^{d,k}=\sum_{m=0}^{k\wedge (d-k)}c^d_{k,m}\cdot\sum_{l=0}^{k}
\binom{k-j}{l}\binom{j}{k-i-l}\binom{j}{k-m-l}\binom{d-k-j}{m+l+i-k},
$$
where $\binom{a}{b}=0$ for integers $a,b$ with $a\ge 0$, if $b<0$ or $b>a$.

We now show that the matrix of coefficients from Lemma~\ref{L2}, that is 
$$D(d,k)=\left(d^{d,k}_{i,j}\right)_{i,j=0}^{k\wedge (d-k)},$$
is regular.


Let an orthonormal basis $ e_1,\ldots ,e_d$ of $\rd$ be fixed. For the given basis, we define  $E_I=\Lin\{ e_i:\, i\in I\}$, where $I\subset\{ 1,\ldots ,d\}$.
Let $\cal L$ be the $\binom{d}{ k}$-dimensional linear space of continuous functions on $G(d,k)$ spanned by the functions $\langle E_I,\cdot\rangle^2$ with $|I|=k$. We equip 
$\cal L$  with the scalar product
$$(f,g)=\int_{G(d,k)}f(U)g(U)\,\nu_k^d(dU)$$
and consider the linear operator $T:\mathcal{L}\to\mathcal{L}$  given by 
$$T: g\mapsto \int_{G(d,k)}g(U)\langle U,\cdot\rangle^2\, \nu_k^d(dU).$$
Lemma \ref{L1} implies that $T$ is well defined. 
By Fubini's theorem,  $T$ is self-adjoint. 
Hence there exist pairwise different real eigenvalues $\alpha_1,\ldots,\alpha_m$ of $T$ with  corresponding orthogonal eigenspaces $H_1,\ldots,H_m$ such that
${\cal L}=H_1\oplus\cdots\oplus H_m$.

\begin{Lemma}
The operator $T$ is surjective on ${\cal L}$.
\end{Lemma}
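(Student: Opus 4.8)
The plan is to show that $T$ is not merely positive semidefinite but strictly positive definite. Since $\mathcal{L}$ is finite-dimensional, this immediately yields that all the eigenvalues $\alpha_1,\dots,\alpha_m$ are strictly positive, so that $T$ is invertible and in particular surjective. Because $T$ is self-adjoint, surjectivity is in any case equivalent to injectivity, and injectivity will follow at once from $(Tg,g)>0$ for every nonzero $g\in\mathcal{L}$. So the whole proof reduces to controlling the quadratic form $g\mapsto (Tg,g)$.

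The key observation is that the kernel $(U,V)\mapsto\langle U,V\rangle^2$ on $G(d,k)$ is positive semidefinite and can be linearized by a feature map into $\bigwedge_k\rd\otimes\bigwedge_k\rd$. Concretely, for $U\in G(d,k)$ I would choose a representing simple unit $k$-vector $U_0\in G_0(d,k)$ and set $\Phi(U)=U_0\otimes U_0$; the tensor $\Phi(U)$ is independent of the choice of orientation of $U_0$, hence well defined and continuous, and it satisfies $\langle\Phi(U),\Phi(V)\rangle=\langle U_0,V_0\rangle^2=\langle U,V\rangle^2$. Writing $m_g=\int_{G(d,k)}g(U)\,\Phi(U)\,\nu_k^d(dU)$, Fubini's theorem and the definition of $T$ give $(Tg,g)=\iint g(U)g(V)\langle\Phi(U),\Phi(V)\rangle\,\nu_k^d(dU)\,\nu_k^d(dV)=\|m_g\|^2\ge 0$, so $T$ is positive semidefinite.

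To upgrade this to positive definiteness, I would use that every $g\in\mathcal{L}$ itself lies in the range of the feature map in the following sense: since $\langle E_I,U\rangle^2=\langle\Phi(E_I),\Phi(U)\rangle$, a general $g=\sum_{|I|=k}c_I\langle E_I,\cdot\rangle^2$ can be written as $g(U)=\langle W,\Phi(U)\rangle$ with the symmetric tensor $W=\sum_{|I|=k}c_I\,\Phi(E_I)$. Pairing $W$ with $m_g$ then recovers the $L^2$-norm of $g$, namely $\langle W,m_g\rangle=\int g(U)\langle W,\Phi(U)\rangle\,\nu_k^d(dU)=\int g(U)^2\,\nu_k^d(dU)$. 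Consequently, if $(Tg,g)=\|m_g\|^2=0$, then $m_g=0$, whence $\int g^2\,\nu_k^d(dU)=\langle W,m_g\rangle=0$; as $g$ is continuous and $\nu_k^d$ has full support on $G(d,k)$, this forces $g\equiv 0$. Thus $(Tg,g)>0$ for every $g\neq 0$, proving that $T$ is positive definite and hence invertible and surjective.

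The main point of the argument is the construction of the feature map $\Phi$ witnessing that $\langle\cdot,\cdot\rangle^2$ is a positive semidefinite kernel, together with the observation that the generators of $\mathcal{L}$ are exactly the functions $\langle W,\Phi(\cdot)\rangle$; this is precisely what makes the identity $\langle W,m_g\rangle=\int g^2\,\nu_k^d(dU)$ available and turns semidefiniteness into definiteness. I expect the only points requiring care to be the sign-independence and continuity of $\Phi$ and the verification that $W$ may be taken genuinely symmetric, both of which are routine.
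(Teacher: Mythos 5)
Your proof is correct, but it follows a genuinely different route from the paper's. You linearize the kernel $(U,V)\mapsto\langle U,V\rangle^2$ by the tensor-square feature map $\Phi(U)=U_0\otimes U_0$, obtain $(Tg,g)=\|m_g\|^2\ge 0$, and then upgrade semidefiniteness to definiteness by observing that every $g\in\mathcal{L}$ has the form $g=\langle W,\Phi(\cdot)\rangle$, so that $m_g=0$ forces $\int g^2\,d\nu_k^d=\langle W,m_g\rangle=0$ and hence $g\equiv 0$ (continuity of $g$ plus full support of $\nu_k^d$ -- the same fact the paper tacitly uses when it calls $(\cdot,\cdot)$ a scalar product on $\mathcal{L}$). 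The paper instead argues entirely inside $\mathcal{L}$ with the spectral decomposition of the self-adjoint operator $T$: it writes each generator $\langle E_I,\cdot\rangle^2=h_1+\cdots+h_m$ in eigencomponents, exploits the reproducing-type identity $\alpha_i h_i(E_I)=(h_i,h_i)$ to conclude that any component with eigenvalue $0$ must vanish, and then exhibits the explicit preimage $h=\sum_i\alpha_i^{-1}h_i$. Your approach buys a strictly stronger conclusion -- $T$ is positive definite, so all eigenvalues are positive, not merely nonzero as in the paper's Corollary~1 -- and it makes transparent why the kernel is nondegenerate (it is a Gram kernel). The paper's argument, on the other hand, needs no tensor construction and produces a preimage of each generator directly, which is exactly what is needed downstream. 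Two small remarks: self-adjointness is not needed to pass from injectivity to surjectivity in finite dimensions; and the symmetry of $W$, which you flag as requiring verification, plays no role in your argument and can be dropped.
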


\begin{proof}  
We fix an index set $I$ with $|I|=k$ and show that 
$\langle E_I,\cdot\rangle^2\in T{\cal L}$. Due to the above observations, we can write
\begin{equation} \label{sum}
\langle E_I,\cdot\rangle^2=h_1+\cdots +h_m
\end{equation}
with $h_i\in H_i$, $1\leq i\leq m$. Then we get
\begin{eqnarray*}
\alpha_ih_i(E_I)&=&\int_{G(d,k)} h_i(U) \langle E_I,U\rangle^2\, \nu_k^d(dU)\\
&=&\int_{G(d,k)} \sum_{j=1}^m h_i(U)h_j(U)\, \nu_k^d(dU)\\
&=&\sum_{j=1}^m \int_{G(d,k)} h_i(U)h_j(U)\, \nu_k^d(dU)\\
&=&\sum_{j=1}^m (h_i,h_j)=(h_i,h_i).
\end{eqnarray*}
If $\alpha_i=0$, for some $i\in\{1,\ldots,m\}$, then $h_i=0$ in \eqref{sum}. 
Therefore we can assume that $\alpha_i\neq 0$ for all functions $h_i$ in \eqref{sum}. Choosing  $h=\sum_i\alpha_i^{-1}h_i$, we obtain that
$$Th=\sum_i\alpha_i^{-1}Th_i=\sum_i h_i=\langle E_I,\cdot\rangle^2,$$
and the proof is complete.
\end{proof}

\begin{Corollary}\label{Cor1}
The operator $T:{\cal L}\to{\cal L}$ is bijective and all its eigenvalues are nonzero.
\end{Corollary}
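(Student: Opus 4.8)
The plan is to obtain both assertions as immediate consequences of the surjectivity of $T$ established in the preceding Lemma, combined with the finite-dimensionality of $\mathcal{L}$. The space $\mathcal{L}$ is spanned by the finitely many functions $\langle E_I,\cdot\rangle^2$ with $|I|=k$, hence $\dim\mathcal{L}\le\binom{d}{k}<\infty$. Since $T$ maps $\mathcal{L}$ into itself (which is guaranteed by Lemma \ref{L1}, as already noted when $T$ was introduced) and is surjective by the Lemma, the rank-nullity theorem forces $\ker T=\{0\}$, so that $T$ is injective as well and therefore bijective.

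For the statement about the eigenvalues, I would use the eigenspace decomposition $\mathcal{L}=H_1\oplus\cdots\oplus H_m$ recorded before the Lemma, where $H_i$ is the eigenspace for the real eigenvalue $\alpha_i$ (recall that $T$ is self-adjoint, so all eigenvalues are real and the phrase ``eigenvalues are nonzero'' is meaningful). If some $\alpha_i$ were zero, then every $h\in H_i$ would satisfy $Th=\alpha_i h=0$, so that $H_i\subseteq\ker T$; since $H_i\neq\{0\}$ by construction, this contradicts the injectivity just obtained. Hence $\alpha_i\neq 0$ for all $i$, which is the remaining claim.

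There is no substantial obstacle here: the entire argument rests on the elementary fact that a surjective endomorphism of a finite-dimensional vector space is automatically bijective, equivalently that such an operator is bijective if and only if $0$ is not among its eigenvalues. The self-adjointness of $T$ plays no role beyond ensuring the eigenvalues are real, and the only point worth verifying is the finite-dimensionality of $\mathcal{L}$, which is immediate from its spanning set.
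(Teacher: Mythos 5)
Your proof is correct and matches the paper's (implicit) reasoning: the paper states this as an immediate corollary of the surjectivity lemma, relying exactly on the fact that a surjective endomorphism of the finite-dimensional space $\mathcal{L}$ is bijective, whence $0$ cannot be an eigenvalue. Nothing is missing.
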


Fix $I\subset\{1,\ldots,d\}$ with $|I|=k$  and consider the linear space
$${\cal L}_I=\Lin\{\langle E_I,\cdot\rangle^2_i : i=0,\ldots ,k\wedge(d-k)\}.$$
By Lemma~\ref{L2}, $T$ maps ${\cal L}_I$ into ${\cal L}_I$. 
By Corollary \ref{Cor1}, $T$ is injective, hence $T|{\cal L}_I$ is  injective and therefore a bijection. Since $D(d,k)$ is the matrix of the restriction $T|{\cal L}_I$, 
we obtain the regularity of $D(d,k)$.

\begin{Proposition} \label{regul}
The matrix $D(d,k)$ is  regular.
\end{Proposition}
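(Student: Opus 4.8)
The plan is to realize $D(d,k)$ as (essentially) the matrix of the restriction of the self-adjoint operator $T$ to the finite-dimensional, $T$-invariant subspace $\mathcal{L}_I$, and then to push the bijectivity of $T$ from Corollary~\ref{Cor1} down to the matrix level. Fix $I\subset\{1,\ldots,d\}$ with $|I|=k$, abbreviate $n=k\wedge(d-k)$, and set $g_i=\langle E_I,\cdot\rangle_i^2$ for $i=0,\ldots,n$, so that $\mathcal{L}_I=\Lin\{g_0,\ldots,g_n\}$. Reading Lemma~\ref{L2} with $A=E_I$ gives $Tg_i=\sum_{j=0}^n d_{i,j}^{d,k}\,g_j$, which shows simultaneously that $T$ maps $\mathcal{L}_I$ into itself and that, once the $g_i$ are known to be linearly independent, $D(d,k)$ coincides (up to transposition) with the matrix of $T|\mathcal{L}_I$ in the ordered basis $(g_0,\ldots,g_n)$.

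The one point I would establish with care is the linear independence of $g_0,\ldots,g_n$, as this is what makes the matrix of $T|\mathcal{L}_I$ well defined and lets regularity pass from the operator to $D(d,k)$. Choosing an orthonormal basis adapted to $I$ exactly as in the proof of Lemma~\ref{L2}, one has the explicit expansion $g_i=\sum_{J\in S_i}\langle E_J,\cdot\rangle^2$, where $S_i=\{J\subset\{1,\ldots,d\}:|J|=k,\ |J\cap I|=k-i\}$. As $i$ runs through $0,\ldots,n$, the sets $S_i$ are nonempty, pairwise disjoint, and together exhaust all $k$-element subsets of $\{1,\ldots,d\}$. Since the functions $\langle E_J,\cdot\rangle^2$ span the $\binom{d}{k}$-dimensional space $\mathcal{L}$ and therefore form a basis of it, the $g_i$ are block-sums over disjoint nonempty blocks of basis vectors, hence linearly independent.

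With independence secured, the conclusion is immediate. By Corollary~\ref{Cor1} the operator $T$ is injective on $\mathcal{L}$, so its restriction to the $T$-invariant subspace $\mathcal{L}_I$ is injective as well, and a direct kernel computation finishes the argument: if $c=(c_0,\ldots,c_n)$ satisfies $c^{\transp}D(d,k)=0$, then $T\bigl(\sum_i c_i g_i\bigr)=\sum_j (c^{\transp}D(d,k))_j\,g_j=0$, so $\sum_i c_i g_i\in\ker T=\{0\}$, and the independence of the $g_i$ forces $c=0$. Thus $D(d,k)$ has trivial left kernel and, being a square matrix, is regular.

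The index bookkeeping for the sets $S_i$ and the identification of $D(d,k)$ with the matrix of $T|\mathcal{L}_I$ are routine given Lemma~\ref{L2}. The only genuinely load-bearing step—and the one I would treat as the main obstacle—is the linear independence of the generating functions $g_i$; once that is in place, everything reduces to the injectivity of $T$ supplied by Corollary~\ref{Cor1}.
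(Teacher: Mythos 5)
Your proposal is correct and takes essentially the same approach as the paper: realize $D(d,k)$ (up to transposition) as the matrix of the restriction of $T$ to the invariant subspace $\mathcal{L}_I$ via Lemma~\ref{L2}, and transfer the injectivity of $T$ from Corollary~\ref{Cor1} to the matrix. The only difference is that you explicitly verify the linear independence of the generators $\langle E_I,\cdot\rangle_i^2$ (by expanding them as sums of the basis functions $\langle E_J,\cdot\rangle^2$ over disjoint nonempty index blocks), a point the paper leaves implicit when it speaks of \emph{the} matrix of $T|\mathcal{L}_I$; this is a careful completion of the same argument, not a different one.
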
 

\section{Proof of Theorem \ref{main}}\label{sec:6}

In this section, we always assume that $k,l\in\{1,\ldots,d-1\}$ and $k+l=d$. 

Comparing the representation \eqref{7late} with \eqref{IR1} and \eqref{ECM}, we see that Theorem~\ref{main}
is a consequence of the following result.

\begin{Proposition}  \label{P-dint}
There exists a continuous function $\varphi^{k,l}$ on $F^\perp(d,\bark)\times 
F^\perp(d,\barl)$ such that 
\begin{eqnarray*}
&&\int_{G^{u^\perp}(d-1,\bark)}\int_{G^{v^\perp}(d-1,\barl)}\langle A,U\rangle^2\varphi^{k,l}(u,U,v,V)\langle V,B\rangle^2\, \nu^{d-1}_{\barl}(dV)\nu^{d-1}_{\bark}(dU)\\
&&\qquad=\frac 1{\gamma(d,k)\gamma(d,l)}\,\|A\wedge u\wedge B\wedge v\|^2 \nonumber
\end{eqnarray*}
for any $(u,A)\in F^\perp(d,\bark)$ and $(v,B)\in F^\perp(d,\barl)$.
\end{Proposition}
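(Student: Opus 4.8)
The plan is to read the double integral as the image of $\varphi$ under a tensor product of two self-adjoint integral operators over Grassmannians, and then to invert this operator by means of the results of Section~\ref{IoG}. For fixed $u\in S^{d-1}$ I would define, on functions $g$ on $G^{u^\perp}(d-1,\bark)$,
\[
(T_u g)(A)=\int_{G^{u^\perp}(d-1,\bark)}\langle A,U\rangle^2\,g(U)\,\nu^{d-1}_{\bark}(dU),
\]
and define $T_v$ analogously on $G^{v^\perp}(d-1,\barl)$. With this notation the left-hand side of the asserted identity is $(T_u\otimes T_v)\bigl(\varphi(u,\cdot,v,\cdot)\bigr)$ evaluated at $(A,B)$, so the task is to solve $(T_u\otimes T_v)\varphi=\mathcal K$ with $\mathcal K(A,B)=\tfrac{1}{\gamma(d,k)\gamma(d,l)}\|A\wedge u\wedge B\wedge v\|^2$. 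Note that $T_u$ is exactly the operator $T$ of Section~\ref{IoG}, but formed inside $u^\perp\cong\R^{d-1}$ with subspace dimension $\bark$ (so that $d-1-\bark=k$); hence Lemma~\ref{L1}, Lemma~\ref{L2} and Proposition~\ref{regul} apply verbatim inside $u^\perp$ and inside $v^\perp$.

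The first genuine step is an algebraic simplification of the target $\mathcal K$. Since $k+l=d$, the quantity $\|A\wedge u\wedge B\wedge v\|$ is the norm of a $d$-vector, so by Hodge duality it equals $|\langle A\wedge u,\star(B\wedge v)\rangle|$. Fixing $(v,B)$ and abbreviating $\zeta=\star(B\wedge v)$, the map $A\mapsto\langle A\wedge u,\zeta\rangle$ is linear and equals $\langle A,\xi\rangle$, where $\xi\in\textstyle\bigwedge_{\bark}u^\perp$ arises from $\zeta$ by contraction with $u$ followed by orthogonal projection onto $\bigwedge_{\bark}u^\perp$. The decisive observation is that $\xi$ is a \emph{simple} multivector: $\star$ sends the simple vector $B\wedge v$ to a simple vector, contraction with $u$ preserves simplicity, and the projection $\bigwedge_{\bark}\R^d\to\bigwedge_{\bark}u^\perp$ is the $\bark$-th exterior power of the projection $\R^d\to u^\perp$, hence again preserves simplicity. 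Thus, for each fixed $B$, the $A$-section of $\mathcal K$ is the single squared product $\tfrac{1}{\gamma(d,k)\gamma(d,l)}\|\xi\|^2\,\langle\,\cdot\,,X\rangle^2$, where $X\in G^{u^\perp}(d-1,\bark)$ is the subspace carried by $\xi$; by symmetry the $B$-sections behave the same way. Consequently $\mathcal K$ lies sectionwise in the finite-dimensional span $\tilde{\mathcal L}^{u^\perp}=\Lin\{\langle W,\cdot\rangle^2\}$ in each variable, and therefore $\mathcal K\in\tilde{\mathcal L}^{u^\perp}\otimes\tilde{\mathcal L}^{v^\perp}$.

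It then remains to invert. The space $\tilde{\mathcal L}^{u^\perp}$ is precisely the range of $T_u$, and since $T_u$ is self-adjoint (the kernel $\langle A,U\rangle^2$ is symmetric, by Fubini), its restriction to its range is a bijection; likewise for $T_v$, so $T_u\otimes T_v$ is a bijection of $\tilde{\mathcal L}^{u^\perp}\otimes\tilde{\mathcal L}^{v^\perp}$ onto itself and a unique solution $\varphi(u,\cdot,v,\cdot)$ exists. To make this explicit I would not invert abstractly but solve, for the single squared product $\langle\cdot,X\rangle^2$, the one-variable equation $T_u f=\langle\cdot,X\rangle^2$ via the ansatz $f=\sum_i\beta_i\langle\cdot,X\rangle_i^2$: by Lemma~\ref{L2} this gives $T_u f=\sum_j\bigl(\sum_i\beta_i d^{d-1,\bark}_{i,j}\bigr)\langle X,\cdot\rangle_j^2$, so matching coefficients reduces to a linear system with matrix $D(d-1,\bark)$, which is regular by Proposition~\ref{regul}. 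Carrying this out in both variables, after a finite biquadratic expansion of $\mathcal K$, produces $\varphi$ as an explicit finite combination of $i$-th products built from $u,v$ through Hodge star, contraction and projection, with coefficients governed by the fixed inverse matrices $D(d-1,\bark)^{-1}$ and $D(d-1,\barl)^{-1}$.

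The continuity of $\varphi$ on $\Forth(d,\bark)\times\Forth(d,\barl)$, and the fact that the inversion can be carried out globally and simultaneously in $u$ and $v$, is where I expect the main difficulty. The geometric data defining $\mathcal K$ (the Hodge star, the contraction by $u$, and the two exterior-power projections) depend smoothly on $u,v$, and the matrices $D(d-1,\cdot)^{-1}$ are constant; the subtlety is that the function spaces $\tilde{\mathcal L}^{u^\perp}$ themselves move with $u$, so to read off continuity one must transport everything to a fixed model Grassmannian along a locally continuous section $u\mapsto\rho_u\in O(d)$ with $\rho_u e_d=u$, and verify that the biquadratic expansion of $\mathcal K$ (in particular the singular vectors coupling its $A$- and $B$-parts) can be chosen to depend continuously on $(u,v)$. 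The uniform invertibility that this requires is exactly what the regularity of $D$ together with the $O(d)$-covariance of $T_u,T_v$ provides, the latter forcing their spectra to be independent of $u,v$.
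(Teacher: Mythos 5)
Your proposal is correct in substance and, once unwound, rests on exactly the same machinery as the paper's proof: Lemma \ref{L2} applied inside $u^\perp$ and inside $v^\perp$, and the regularity of $D(d-1,\bark)$ and $D(d-1,\barl)$ from Proposition \ref{regul}, so that everything reduces to a linear system governed by the Kronecker product $D(d-1,\bark)\otimes D(d-1,\barl)$. The difference is organizational. The paper never inverts an operator abstractly: it writes down in advance the explicit family $\varphi^{k,l}_{p,q}$ (sums of $\|\bigwedge_{i\in I}u_i\wedge u\wedge\bigwedge_{j\in J}v_j\wedge v\|^2$ over index sets $I,J$ meeting fixed $I_0,J_0$ in prescribed cardinalities), proves that these functions are independent of the adapted bases, observes that the right-hand side of the Proposition is $(\gamma(d,k)\gamma(d,l))^{-1}\varphi^{k,l}_{0,0}(u,A,v,B)$, shows by two applications of Lemma \ref{L2} that the double integral maps this family into itself with matrix $D\otimes D$, and then solves for constant coefficients $\alpha_{p,q}$. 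Your Hodge-dual/contraction analysis of the sections of $\mathcal{K}$ is a clean repackaging of the identity the paper uses for the same purpose (its $(\bigwedge_{i\in I}u_i\wedge u\wedge v)^\perp$ is, up to duality, your $\xi$), and your two successive one-variable inversions are equivalent to the paper's single Kronecker system. One assertion you state without proof, namely that $\tilde{\mathcal L}^{u^\perp}$ is \emph{precisely} the range of $T_u$ (self-adjointness only gives bijectivity of $T_u$ on its range, not that the range exhausts $\tilde{\mathcal L}^{u^\perp}$), is in fact supplied by your own explicit ansatz $f=\sum_i\beta_i\langle\cdot,X\rangle_i^2$, which hits every generator $\langle\cdot,X\rangle^2$.

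Where the two routes genuinely diverge is the point you flag at the end. In the paper, continuity and global well-definedness are immediate: $\varphi^{k,l}$ is a fixed constant-coefficient combination of the basis-independent, hence continuous, functions $\varphi^{k,l}_{p,q}$. In your setup the solution is produced pointwise in $(u,v)$ through a non-canonical biquadratic expansion of $\mathcal{K}$, and you leave its continuous dependence as a plan rather than a proof. This is the only real gap, and it closes more easily than you suggest: you never need the expansion itself to vary continuously. Since $T_u\otimes T_v$ is bijective on $\tilde{\mathcal L}^{u^\perp}\otimes\tilde{\mathcal L}^{v^\perp}$, the solution is unique there, hence $O(d)$-equivariant; transporting by local continuous sections $u\mapsto\rho_u$, $v\mapsto\sigma_v$ in $O(d)$ converts the problem into $(T\otimes T)\tilde\varphi_{u,v}=\tilde{\mathcal K}_{u,v}$ in a \emph{fixed} finite-dimensional space with a \emph{fixed} bijective operator, where $(u,v)\mapsto\tilde{\mathcal K}_{u,v}$ is continuous; so $\tilde\varphi_{u,v}$ depends continuously on $(u,v)$, and by uniqueness the local definitions patch to a global continuous $\varphi^{k,l}$. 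Alternatively, carrying your explicit inversion to the end (invert in $V$ using the simple vectors spanning $T_qV$, then in $U$) reproduces exactly the paper's $\sum_{p,q}\alpha_{p,q}\varphi^{k,l}_{p,q}$, after which continuity is manifest.
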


\begin{proof}
Given $0\leq p\leq k\wedge\bark$ and $0\leq q\leq l\wedge\barl$, let us introduce the function
$$\varphi^{k,l}_{p,q}(u,U,v,V)=\sum_{\substack{|I|=\bark\\ |I\cap I_0|=\bark-p}}\;\; 
\sum_{\substack{|J|=
\barl\\|J\cap J_0|=\barl-q}}\left\|\textstyle{\bigwedge}_{i\in I}u_i\wedge u\wedge\textstyle{\bigwedge}_{j\in J}v_j\wedge v\right\|^2$$
on $F^\perp(d,k^*)\times F^\perp(d,l^*)$, 
where $ u_1,\ldots ,u_d$ and $ v_1,\ldots ,v_d$ are orthonormal bases of $\rd$ such that $u_d=u$, $v_d=v$, 
$U=\Lin\{ u_i:\, i\in I_0\}$ and $V=\Lin\{ v_j:\, j\in J_0\}$. Here  $I_0,J_0$ are fixed subsets 
 of $\{1,\ldots,d-1\}$ with $|I_0|=\bark$, $|J_0|=\barl$, and the summation is over subsets 
 $I,J\subset\{1,\ldots,d-1\}$.  First, 
we  show that $\varphi^{k,l}_{p,q}(u,U,v,V)$ does not depend on the choice of the two bases. We start with a simple remark. 
Let $a_1\ldots,a_d$ be an orthonormal basis of $\R^d$ and let $b_1,\ldots,b_s$ denote an orthonormal system in $\R^d$, 
where $r,s\ge 0$ are integers such that $r+s=d$. Expressing $b_1,\ldots,b_s$ in the basis $a_1,\ldots,a_d$, and by 
basic properties of alternating products, we obtain
\begin{align*}
&\|a_1\wedge\ldots\wedge a_r\wedge b_1\wedge\ldots\wedge b_s\|^2\\
&\qquad=\left\|a_1\wedge\ldots\wedge a_r\wedge \sum_{i=r+1}^d\langle b_1,a_i\rangle a_i\wedge \ldots\wedge \sum_{i=r+1}^d\langle b_s,a_i\rangle a_i\right\|^2\\
&\qquad=\left(\det\left(\left(\langle a_i,b_j\rangle\right)_{i=r+1,j=1}^{d,s}\right)\right)^2\\
&\qquad=
\langle a_{r+1}\wedge\ldots\wedge a_d,
b_1\wedge\ldots\wedge b_s\rangle^2.
\end{align*}
Assume that 
$\textstyle{\bigwedge}_{i\in I}u_i\wedge u\wedge v\neq 0$. We denote by $(\textstyle{\bigwedge}_{i
\in I}u_i\wedge u\wedge v)^\perp$ a unit simple $(k-1)$-vector whose associated subspace is orthogonal to the subspace 
associated with $\textstyle{\bigwedge}_{i\in I}u_i\wedge u\wedge v$. Observe that $|I|+1+|J|+1=d-k+d-l=d$. Then we get from the preceding remark that
$$
\left\|\textstyle{\bigwedge}_{i\in I}u_i\wedge u\wedge\textstyle{\bigwedge}_{j\in J}v_j\wedge v\right\|^2
=\left\|\textstyle{\bigwedge}_{i\in I}u_i\wedge 
u\wedge v\right\|^2 \left\langle \left(\textstyle{\bigwedge}_{i\in I}u_i\wedge u\wedge v\right)^\perp,\textstyle{
\bigwedge}_{j\in J}v_j\right\rangle^2.
$$
Since 
$$
\left\{\textstyle{\bigwedge}_{j\in J}v_j:|J|=l^*,|J\cap J_0|=l^*-q\right\}
$$
is an orthonormal basis of $T_qV$,
we obtain
\begin{align*}
&\varphi^{k,l}_{p,q}(u,U,v,V)\\
&\quad=\sum_{\substack{|I|=\bark\\ |I\cap I_0|=\bark-p}}
\left\|\textstyle{\bigwedge}_{i\in I}u_i\wedge 
u\wedge v\right\|^2\sum_{\substack{|J|=\barl\\ |J\cap J_0|=\barl-q}}
\left\langle \left(\textstyle{\bigwedge}_{i\in I}u_i\wedge u\wedge v\right)^\perp,\textstyle{
\bigwedge}_{j\in J}v_j\right\rangle^2
\\
&\quad=\sum_{\substack{|I|=\bark\\ |I\cap I_0|=\bark-p}}
\left\|\textstyle{\bigwedge}_{i\in I}u_i\wedge 
u\wedge v\right\|^2\left\|p_{T_qV}\left(\textstyle{\bigwedge}_{i\in I}u_i\wedge u\wedge v\right)^\perp\right\|^2\\
&\quad=
\sum_{\substack{|I|=\bark\\|I\cap I_0|=\bark-p}}\left\|\textstyle{\bigwedge}_{i\in I}u_i\wedge 
u\wedge v\right\|^2 \left\langle \left(\textstyle{\bigwedge}_{i\in I}u_i\wedge u\wedge v\right)^\perp,V\right\rangle_q^2,
\end{align*}
where the symmetry of the $q$th product was used. Note that the preceding argument can be considered in $\R^d$ or with 
respect to $v^\perp$ as the ambient space. This is also true for the $q$-product which appears in the last equation. 
Moreover,  if 
$\textstyle{\bigwedge}_{i\in I}u_i\wedge 
u\wedge v=0$, then we can choose any $l^*$-dimensional subspace for $\left(\textstyle{\bigwedge}_{i\in I}u_i\wedge u\wedge v\right)^\perp$, since then the right-hand side is zero. 
 This  shows the independence of the choice of the sequence $(v_j)$. By a similar argument, the independence of the choice of the sequence $(u_i)$ is shown.

From Lemma~\ref{L2} we obtain
\begin{eqnarray*}
\lefteqn{\int_{G^{v^\perp}(d-1,\barl)}\varphi^{k,l}_{p,q}(u,U,v,V)
\langle V,B\rangle^2\, \nu^{d-1}_{\barl}(dV)}\\
&=&\sum_{\substack{|I|=\bark\\|I\cap I_0|=\bark-p}}\left\|
\textstyle{\bigwedge}_{i\in I}u_i\wedge u\wedge v\right\|^2\\
&&\qquad \times \int_{G^{v^\perp}(d-1,\barl)}
\left\langle \left(\textstyle{\bigwedge}_{i\in I}u_i\wedge u\wedge v\right)^\perp,V\right\rangle_q^2 \langle V,B\rangle^2\, \nu^{d-1}_{\barl}(dV)\\
&=&\sum_{\substack{|I|=\bark\\|I\cap I_0|=\bark-p}}\left\|\textstyle{\bigwedge}_{i\in I}u_i\wedge u\wedge v\right\|^2 
\sum_{j=0}^{l\wedge\barl}d^{d-1,\barl}_{q,j} \left\langle \left(\textstyle{\bigwedge}_{i\in I}u_i\wedge u\wedge v\right)^\perp,B\right\rangle^2_j\\
&=&\sum_{j=0}^{l\wedge\barl}d^{d-1,\barl}_{q,j} \varphi^{k,l}_{p,j}(u,U,v,B).
\end{eqnarray*}
Using Lemma~\ref{L2} again, we finally get
\begin{eqnarray*} \label{double-d}
&&\int_{G^{u^\perp}(d-1,\bark)}\int_{G^{v^\perp}(d-1,\barl)}\langle A,U\rangle^2\varphi^{k,l}_{p,q}(u,U,v,V)\langle V,B\rangle^2\, \nu^{d-1}_{\barl}(dV)\nu^{d-1}_{\bark}(dU)\, \nonumber\\
&&=\sum_{i=0}^{k\wedge\bark}\sum_{j=0}^{l\wedge\barl}d^{d-1,\bark}_{p,i}d^{d-1,\barl}_{q,j}\varphi^{k,l}_{i,j}(u,A,v,B).
\end{eqnarray*}
Now we prove the existence of coefficients $\alpha_{p,q}\in\R$ such that the system of linear equations
\begin{equation}\label{system}
\sum_{p=0}^{k\wedge\bark}\sum_{q=0}^{l\wedge\barl}\alpha_{p,q} d^{d-1,\bark}_{p,i}d^{d-1,\barl}_{q,j}=\begin{cases}
(\gamma(d,k)\gamma(d,l))^{-1},&i=j=0,\\[1ex]
0,& \mbox{otherwise},
\end{cases}
\end{equation}
is satisfied. If this has been shown, then it follows that the function $\varphi^{k,l}$ given by  
\begin{equation} \label{phi}
\varphi^{k,l} =\sum_{p=0}^{k\wedge\bark}\sum_{q=0}^{l\wedge\barl}\alpha_{p,q}\varphi^{k,l}_{p,q}
\end{equation}
is a solution of our problem.  

The matrix of the linear system \eqref{system} can be written as the Kronecker product of two matrices, $D(d-1,\bark)\otimes D(d-1,\barl)$, where 
the Kronecker product of an $r\times r$ matrix $M=(m_{i,j})_{i,j=1}^r$ and an $s\times s$ matrix $N$ is the $(rs)\times (rs)$ 
matrix defined by 
$$M\otimes N=\left(\begin{array}{ccc}
            m_{11}N&\ldots &m_{1k}N\\
	    \vdots&&\vdots\\
	    m_{k1}N&\ldots &m_{kk}N
	    \end{array}\right) .$$
It is well known that if $M,N$ are two regular square matrices, then $M\otimes N$ is regular as well. 
The system \eqref{system} of 
linear equations is then equivalent to
\begin{align*}
&(\alpha_{0,0},\ldots,\alpha_{0,l\wedge l^*},\alpha_{1,0},\ldots,\alpha_{1,l\wedge  l^*},\ldots,\alpha_{k\wedge\bark,l\wedge\barl})D(d-1,\bark)\otimes D(d-1,\barl)\\
&\qquad =
((\gamma(d,k)\gamma(d,l))^{-1},0,\ldots,0).
\end{align*}
Hence, by Proposition \ref{regul}, the $(k\wedge k^* +1)(l\wedge l^*+1)$ matrix $D(d-1,\bark)\otimes D(d-1,\barl)$ 
is  regular and therefore the above linear system has a unique solution and the proof is complete.
\end{proof}

\medskip

\noindent
{\bf Remark.} The function $\varphi^{k,l}$ constructed in the proof has the symmetry 
property $\varphi^{k,l}(u,U,v,V)=\varphi^{l,k}(v,V,u,U)$ for all $(u,U)\in F^\perp(d,k^*)$ and $(v,V)\in F^\perp(d,l^*)$.

\medskip

\section{Proof of Theorem \ref{thm3}}

(a) Let $\rho\in O(d)$. Relation \eqref{IR1} holds for all $\ep>0$ and arbitrary 
convex bodies $K,L\subset\rd$. Since $ V_{k,l}^{(\ep)}(K,\rho L)\nearrow V_{k,l}(K,\rho L) $ 
as $\ep\searrow 0$, we have to show that 
\begin{align*}
&\iint F_{k,l}^{(\ep)}(\angle (u,v))\varphi^{(k,l)}(u,U,v,V) \, 
\Omega_k(K;d(u,U))\, \Omega_l(\rho L;d(v,V))\\
&\qquad \to \iint F_{k,l}(\angle (u,v))\varphi^{(k,l)}(u,U,v,V) \, 
\Omega_k(K;d(u,U))\, \Omega_l(\rho L;d(v,V))
\end{align*}
as $\ep\searrow 0$, for $\nu_d$-almost all $\rho\in O(d)$. 

Since $F_{k,l}^{(\ep)}\nearrow F_{k,l}$ as $\ep\searrow 0$, the result follows if the dominated 
convergence theorem can be applied. To justify this, observe that, for $\theta\in[0,\pi)$ and 
$(u,U,v,V)\in F^\perp(d,k^*)\times F^\perp(d,l^*)$,
\begin{align*}
0\le F_{k,l}^{(\ep)}(\theta)&\le F_{k,l}(\theta)\leq\mbox{const}\cdot {\sin^{1-d}\theta},\\
|\varphi^{k,l}(u,U,v,V)|&\leq\mbox{const}\cdot |\sin\angle(u,v)|^2.
\end{align*}
For the latter inequality, we use \eqref{phi} and the general estimate $\|\xi\wedge\eta\|\le \|\xi\|\cdot \|\eta\|$ which 
holds for arbitrary simple multivectors (cf.\ \cite[p.\ 32]{Federer69} where, however, the norm is denoted by $|\cdot|$), 
and implies that
\begin{align*}
\left\|\textstyle{\bigwedge}_{i\in I}u_i\wedge u\wedge\textstyle{\bigwedge}_{j\in J}v_j\wedge v\right\|^2
&=\left\|\textstyle{\bigwedge}_{i\in I}u_i\wedge  \textstyle{\bigwedge}_{j\in J}v_j\wedge u\wedge v\right\|^2\\
&\le\|u\wedge v\|=|\sin\angle (u,v)|^2.
\end{align*}

Thus we have
$$
F_{k,l}^{(\ep)}(\angle(u,v))\,|\varphi^{k,l}(u,U,v,V)|\le \mbox{const}\cdot \sin^{3-d}\angle(u,v).
$$
It remains to show that 
$$
\iint \sin^{3-d}\angle(u,v)\,\Eam_k(K;d(u,U))\,\Eam_l(\rho L;d(v,V))<\infty,
$$
for $\nu_d$-almost all $\rho\in O(d)$. To see this, we use Fubini's theorem, the fact that 
(up to constants) the area measures are image measures of the flag measures (cf.~the end of Section 3), and the 
$O(d)$ covariance of the  area measures to obtain (with varying constants,  
which may also depend on $K,L$) 
\begin{eqnarray*}
&&\int_{SO(d)}    \iint  \sin^{3-d}\angle(u,v) \, 
\Eam_k(K;d(u,U))\,\Eam_l(\rho L;d(v,V))\,\nu_d(d\rho)\\
&&\qquad\leq\mbox{const}\cdot\int_{SO(d)}\iint {\sin^{3-d}\angle(u,v)}\, S_k(K,du)\, S_l(\rho L ,dv)\,\nu_d(d\rho)\\
&&\qquad\leq\mbox{const}\cdot\iint\int_{SO(d)} {\sin^{3-d}\angle(u,\rho v)}\,\nu_d(d\rho) S_k(K,du)
\, S_l(L ,dv)\\
&&\qquad\leq\mbox{const}\cdot\int_{S^{d-1}}\sin^{3-d}\angle(u,w)\,\mathcal{H}^{d-1}(dw)\cdot S(K)S(L),
\end{eqnarray*}
where $S(K)$ and $S(L)$ denote the surface areas of $K$ and $L$, respectively. Then we  use 
the fact that
$$\int_{S^{d-1}}{\sin^{3-d}\angle(u,w)}\, {\cal H}^{d-1}(dw)=\mathcal{H}^{d-2}(S^{d-2})\cdot \int_0^\pi\sin \alpha \, d\alpha=2\mathcal{H}^{d-2}(S^{d-2})$$
is a finite constant, independent of $u\in S^{d-1}$.

(b) If the support function of $K$ (say) is of class $C^{1,1}$, then all  area measures 
of $K$ are absolutely continuous with bounded density (see \cite[Satz 4.7]{Weil73}), that is
$$
S_k(K,\cdot)\le \mbox{const}\cdot \mathcal{H}^{d-1}\llcorner S^{d-1},
$$
for $k=1,\ldots,d-1$. Hence the estimate
\begin{eqnarray}
&&\iint \sin^{3-d}\angle(u,v)\, \Eam_k(K;d(u,U))\,\Eam_l( L;d(v,V))\nonumber\\
&&\qquad \leq\mbox{const}\cdot\iint\sin^{3-d}\angle(u, v)\, S_k(K,du)\,S_l( L,dv)\label{addref}\\
&&\qquad \leq\mbox{const}\cdot\iint\sin^{3-d}\angle(u,v)\, \mathcal{H}^{d-1}(du)\,S_l( L,dv)\nonumber\\
&&\qquad \leq\mbox{const}\cdot S(L)\nonumber
\end{eqnarray}
shows again that the dominated convergence theorem can be applied.

(c) Let $K,L\subset\rd$ be polytopes in general relative position, and let $k,l\in \{1,\ldots,d-1\}$ 
with $k+l=d$.  Let $F\in\mathcal{F}_k(K)$, $G\in\mathcal{F}_l(L)$ and define $N(F)=L(F)^\perp$, $N(G)=L(G)^\perp$. Since 
$$
(L(F)\cap L(G))^\perp=(L(F)^\perp+L(G)^\perp)^{\perp\perp}=N(F)+ N(G),
$$
we get
\begin{eqnarray}\label{relpos}
\dim(N(F)\cap N(G))&=&\dim(N(F))+\dim(N(G))-(d-\dim(L(F)\cap L(G)))\nonumber\\
&=&\dim(L(F)\cap L(G))=0.
\end{eqnarray}
We  write $N(K,F)$ for the normal cone of $K$ at $F$, $\nu(K,F)=N(K,F)\cap S^{d-1}$, and define 
$N(L,G)$ and $\nu(L,G)$ similarly. Then \eqref{relpos} implies 
$$
\nu(K,F)\cap \pm \nu(L,G)=\emptyset.
$$ 
If $K$ is a polytope, then
$$
S_k(K,\cdot)=\mbox{const}\cdot \sum_{F\in\mathcal{F}_k(K)}\mathcal{H}^k(F)
\mathcal{H}^{k^*}(\nu(K,F)\cap \cdot),
$$
see \cite[(4.2.18)]{S}. 
Therefore, by a compactness argument, we obtain that $\angle(u,v)\in [\delta,\pi-\delta]$, for a constant $\delta\in(0,\pi)$, all 
unit vectors $u$ in the support of $S_k(K,\cdot)$, and all  unit vectors $v$ in the support of $S_l(L,\cdot)$. This implies 
again that the right-hand side in \eqref{addref} is finite, since $|\sin\angle(u,v)|$ is bounded from below by a positive 
constant for all vectors $u,v$ under consideration, and thus 
the dominated convergence theorem can be applied. 

This completes the proof of Theorem \ref{thm3}.

\bigskip

\noindent
{\bf Remarks.} \begin{enumerate}
\item The proof of the preceding theorem shows that \eqref{IR2} holds 
for a pair of convex bodies $K,L\subset\rd$ 
whenever 
$$
\iint \sin^{3-d}\angle(u, v)\, S_k(K,du)\,S_l( L,dv)<\infty.
$$
That this condition is not always satisfied is shown by the example in the next section. 
\item If $K,L\subset \rd$ are polytopes, then equation \eqref{IR2} holds for a particular $k\in \{1,\ldots,d-1\}$, if 
the assumption of general relative position is satisfied just for this particular $k$. 
\item For $U\in G(d,k)$, the $k$-dimensional volume of the orthogonal projection of $K$ onto $U$ 
can be expressed as a special mixed volume, that is
$$
\mathcal{H}^k(K|U)=\frac{\binom{d}{k}}{\mathcal{H}^{d-k}(U_{d-k})}\cdot V(K[k],U_{d-k}[d-k]),
$$
where $U_{d-k}\subset U^\perp$ is any $(d-k)$-dimensional convex body with positive 
$(d-k)$-dimensional volume (this follows from \cite[(5.3.23)]{S} and the linearity properties of mixed volumes). If we choose 
$U_{d-k}=U^\perp\cap B^d$ and recall \eqref{laterefs}, then we obtain 
\begin{align*}
\mathcal{H}^k(K|U)&=\frac{\binom{d}{k}}{\kappa_{d-k}}\cdot V(K[k],U^\perp\cap B^d[d-k])\\
&=\frac{1}{\kappa_{d-k}}\iint F_{k,d-k}(\angle(v,w))\varphi^{k,d-k}(v,V,w,W)\, \\
&\qquad\qquad\times\Omega_{d-k}(U^\perp\cap B^d;d(v,V))\, \Omega_k(K;d(w,W)),
\end{align*}
whenever \eqref{IR2} holds for $K$ and $U^\perp\cap B^d$. This is true, for instance, for $K$ and 
$\nu_{k}$-almost all $U\in G(d,k)$. In this situation, relation \eqref{IR2} also holds if $S_k(K,\cdot)$ is absolutely 
continuous with bounded density with respect to spherical Lebesgue measures or if $K$ 
is a polytope such that $L(F)\cap U^\perp=\{o\}$ for all $k$-dimensional faces $F$ of $K$.  
\end{enumerate}

\section{Example}
Consider the case $d=4$ and $k=l=2$. Hence we have $\bark=\barl=1$. Let $(u,U), (v,V)\in F^\perp(4,1)$ be such that 
$u,v$ are linearly independent and define
\begin{eqnarray*}
\beta&=&\angle(u,v)\in (0,\pi),\\
L&=&u^\perp\cap v^\perp\in G(4,2),\\
\gamma&=&\angle (U|L,V|L)\in[0,\pi],\\
\alpha_U&=&\angle(U,L), \ \alpha_V=\angle(V,L)\in[0,\frac{\pi}2].
\end{eqnarray*}
Here we write $U|L$ for the orthogonal projection of $U$ onto $L$. 
The angle between a one-dimensional linear subspace such as $U$ and a two-dimensional linear 
subspace such as $L$ is defined as the angle between $U$ and $U|L$ 
(unless $U\perp L$ where we
define this angle to be $\pi/2$). 
Note that $\gamma$ is not defined if $U\perp L$ or $V\perp L$, but in these cases, 
the subsequent formulae are  still valid, since then $\cos\alpha_U=0$ or $\cos\alpha_V=0$  
so that the value of $\sin\gamma$ is not relevant. 

The functions $\varphi^{2,2}_{i,j}$, which were introduced in the proof of Proposition \ref{P-dint}, 
can now be expressed in terms of these angles, that is
\begin{eqnarray*}
\varphi^{2,2}_{0,0}(u,U,v,V)&=&\sin^2\beta\cos^2\alpha_U\cos^2\alpha_V\sin^2\gamma,\\
\varphi^{2,2}_{0,1}(u,U,v,V)&=&\sin^2\beta\cos^2\alpha_U
(1-\sin^2\gamma\cos^2\alpha_V),\\
\varphi^{2,2}_{1,0}(u,U,v,V)&=&\sin^2\beta\cos^2\alpha_V
(1-\sin^2\gamma\cos^2\alpha_U),\\
\varphi^{2,2}_{1,1}(u,U,v,V)&=&\sin^2\beta\big(\sin^2\gamma\cos^2\alpha_U\cos^2\alpha_V+\sin^2\alpha_U+\sin^2\alpha_V\big).\\
\end{eqnarray*}

To show this, let us choose unit vectors $u_1,u_2,u_3$ such that $\Lin\{u_1\}=U$ and $u_1,u_2,u_3,u$ is an orthonormal basis of $\mathbb{R}^4$. 
Similarly, we choose unit vectors $v_1,v_2,v_3$ such that $\Lin\{v_1\}=V$ and $v_1,v_2,v_3,v$ is an orthonormal basis of $\mathbb{R}^4$.
From the definition of $\varphi^{2,2}_{0,0}$, provided in the proof of Proposition \ref{P-dint}, we then obtain (omitting 
arguments on the left-hand side, also subsequently)
\begin{align*}
\varphi^{2,2}_{0,0}&=\|u_1\wedge u\wedge v_1\wedge v\|^2=\|u\wedge v\|^2\|u_1|L \wedge v_1|L\|^2\\
&=\sin^2\beta\cdot\| u_1|L\|^2\| v_1|L\|^2\left(\sin\angle( u_1|L, v_1|L)\right)^2\\
&=\sin^2\beta\sin^2\gamma\cos^2\alpha_U\cos^2\alpha_V,
\end{align*}
where $ u_1|L$ denotes the orthogonal projection of $u_1$ onto $L$ (etc.). 
Here we have used the fact that if $\xi$ is a simple unit $k$-vector with associated linear subspace $U$ and $L=U^\perp$, 
and if $\eta_1,\ldots,\eta_l\in\R^d$, then we have
$$
\|\xi\wedge\eta_1\wedge\ldots\wedge\eta_l\|=\|\xi\|\cdot\|\eta_1|L\wedge\ldots\wedge \eta_l|L\|.
$$

Next, we have
\begin{align*}
\varphi^{2,2}_{0,0}+\varphi^{2,2}_{0,1}&=\sum_{j=1}^3\|u_1\wedge v_j\wedge u\wedge v\|^2
=\|u_1\wedge u\wedge v\|^2\sum_{j=1}^3\| v_j|(u_1\wedge u\wedge v)^\perp\|^2\\
&=\|u_1\wedge u\wedge v\|^2=\|u\wedge v\|^2\cdot \|u_1|L\|^2=\sin^2\beta\cos^2\alpha_U,
\end{align*}
where we assume that $u_1,u,v$ are linearly independent, i.e.\ $u_1$ is not orthogonal to $L$. 
In this case, the $3$-vector $u_1\wedge u\wedge v$ is associated with a 3-dimensional linear subspace 
of $\R^4$. Let $a$ be a unit vector orthogonal to this subspace. Then $v_j|(u_1\wedge u\wedge v)^\perp=v_j|\text{Lin}\{a\}$ 
and $a\in v^\perp$. Since 
$v_1,v_2,v_3$ is an orthonormal basis of $v^\perp$, we deduce that
$$
\sum_{j=1}^3\|v_j|(u_1\wedge u\wedge v)^\perp\|^2=\sum_{j=1}^3\langle v_j,a\rangle^2=\|a\|^2=1.
$$
However, even if $u_1,u,v$ are not linearly independent the resulting equation remains true. 

The second assertion now follows from the first one. The third assertion is proved in a similar way. 

The 
remaining assertion is obtained from
\begin{align*}
\varphi^{2,2}_{0,0}+\varphi^{2,2}_{0,1}+\varphi^{2,2}_{1,0}+\varphi^{2,2}_{1,1}&=\sum_{i=1}^3\sum_{j=1}^3
\|u_i\wedge v_j\wedge u\wedge v\|^2
=\sum_{i=1}^3\|u_i\wedge u\wedge v\|^2\\
&=\|u\wedge v\|^2\sum_{i=1}^3\|u_i|L\|^2=2\sin^2\beta.
\end{align*}
Here we used that $u^\perp\cap  v^\perp=L$ is a 2-dimensional linear subspace of $u^\perp$, $u_1,u_2,u_3$ is an 
orthonormal basis of $u^\perp$ and therefore, if  $a,b$ is an orthonormal basis of $L$,  then 
$$
\sum_{i=1}^3\|u_i|L\|^2=\sum_{i=1}^3\left(\langle a,u_i\rangle^2+\langle b,u_i\rangle^2\right)=\|a\|^2+\|b\|^2=2.
$$
Now we compute the coefficients $\alpha_{i,j}$. We have $\gamma(4,2)=\frac 3{2\pi}$ (the constant from \eqref{ECM}) and 
$$c_{1,0}^3=\frac 15,\quad c_{1,1}^3=\frac 1{15}.$$
To verify this, let $a\in S^2$ be arbitrary and  $A=\Lin\{a\}$. Since $\langle A,A\rangle_1=0$, we get
$$
c^3_{1,0}=\int_{G(3,1)}\langle A,V\rangle^2\langle V,A\rangle^2\,\nu_{1}^{3}(dV)=
\frac{1}{\mathcal{H}^2(S^2)}\int_{S^2}\langle v, a\rangle^4\, \mathcal{H}^2(dv)=\frac{1}{5},
$$
where the integral is independent of the particular choice of $a$ by the $O(3)$ invariance of $\nu^3_1$. 
For the calculation of $c_{1,1}^3$, we choose $a=(1,0,0)^\top$, $b=(0,0,1)^\top$, $A=\Lin\{a\}$ and $B=\Lin\{b\}$. 
Then we obtain
$$
c_{1,1}^3=\int_{G(3,1)}\langle A,V\rangle^2\langle V,B\rangle^2\,\nu_{1}^{3}(dV)=
\frac{1}{\mathcal{H}^2(S^2)}\int_{S^2}\langle a, v\rangle ^2\langle v, b\rangle ^2\, \mathcal{H}^2(dv)=\frac{1}{15}.
$$
Finally, from \eqref{combrel} and the subsequent remark we get
\begin{align*}
d_{0,0}^{3,1}&=c_{1,0}^3\cdot 1+c_{1,1}^{3}\cdot 0=\frac{3}{15},\\
d_{0,1}^{3,1}&=c_{1,0}^3\cdot 0+c_{1,1}^{3}\cdot 1=\frac{1}{15},\\
d_{1,0}^{3,1}&=c_{1,0}^3\cdot 0+c_{1,1}^{3}\cdot 2=\frac{2}{15},\\
d_{1,1}^{3,1}&=c_{1,0}^3\cdot 1+c_{1,1}^{3}\cdot 1=\frac{4}{15},\\
\end{align*}
and thus
$$D(3,1)=\frac 1{15}\left(\begin{array}{cc}
3&1\\2&4\end{array}\right),$$
$$D(3,1)\otimes D(3,1)=\frac 1{225}\left(
\begin{array}{cccc}
9&3&3&1\\6&12&2&4\\6&2&12&4\\4&8&8&16
\end{array}\right).$$
Then we deduce
$$\Big(\alpha_{p,q}\Big)_{p,q=0}^1=\pi^2\left( \begin{array}{cc}
16&-4\\-4&1\end{array}\right).$$
The function $\varphi^{2,2}$ is now determined as 
\begin{eqnarray*}
\varphi^{2,2}(u,U,v,V)&=&\pi^2\sin^2\beta\cdot 
(25\sin^2\gamma\cos^2\alpha_U\cos^2\alpha_V\\
&&+\sin^2\alpha_U+\sin^2\alpha_V-4\cos^2\alpha_U-4\cos^2\alpha_V).
\end{eqnarray*}

Let $K$ be a unit square in a 2-dimensional subspace $L\in G(4,2)$. In this case, the normal bundle 
of $K$ has a simple structure and all generalized curvatures of $K$ are either zero or infinite. To be more precise, since $d=4$, for each $(x,u)\in\nor (K)$ there are three generalized principal curvatures which we arrange in increasing order,
$$ 0\le k_1(K;x,u)\le k_2(K;x,u)\le k_3(K;x,u)\le \infty.$$
If $x$ is a relative interior point of $K$, then $(x,u)\in\nor (K)$ if and only if $u\in L^\perp\cap S^3 = S^1_{L^\perp}$, and for such pairs $(x,u)\in \nor (K)$ we have
$$
k_i(K;x,u)= \begin{cases} 0,& i=1,2,\\
\infty,& i=3.
\end{cases}
$$                                
Therefore,
$$
\BK_{\{i\}}(K;x,u)= \begin{cases} 0,& i=1,2,\\ 1,& i=3,
\end{cases}
$$
and $a_3(K;x,u)$ is a unit vector in $L^\perp\cap u^\perp$. If $x\in K$ is not a relative interior point of $K$ 
and $(x,u)\in\nor (K)$, then at least two of the generalized principal curvatures are infinite and 
therefore $\BK_I(K;x,u)=0$, if $|I|=1$. Hence, for $d=4$, $k=2$ (thus $k^* =1$) and $K$ as above, we get

\begin{align}
&\int g(u,U)\,\Omega_2(K;d(u,U))\nonumber\\
&\qquad =\frac 3{2\pi}\int_{\nor (K)} \sum_{i=1}^3 \BK_{\{i\}}(K;x,u) \int_{G^{u^\perp}(3,1)} g(u,V)
\langle V,A_{\{i\}}(K;x,u)\rangle^2\, \nu_1^3(dV)\nonumber\\
&\hspace{6cm}\times {\cal H}^3(d(x,u))\nonumber\\
&\qquad =\frac 3{2\pi}\int_{K\times S^1_{L^\perp}} \int_{G^{u^\perp}(3,1)}g(u,V)
\langle V,L^\perp\cap u^\perp\rangle^2\, \nu_1^3(dV)\,({\cal H}^2\otimes{\cal H}^1)(d(x,u)),\nonumber\\
&\qquad =\frac 3{2\pi}\int_{S^1_{L^\perp}} \int_{G^{u^\perp}(3,1)}g(u,V)
\langle V,L^\perp\cap u^\perp\rangle^2\, \nu_1^3(dV)\,{\cal H}^1(du),\label{neuGl}
\end{align} 
for each bounded measurable function $g$ on $F^\perp(4,1)$.
In particular, the  area measure 
of order $2$ of $K$ is a multiple of ${\cal H}^1$ restricted to the unit circle $S^1_{L^\perp}$ in 
$L^\perp$. 
Clearly, we have
\begin{equation} \label{F22}
F_{2,2}(\beta)\sin^3\beta \to  \mbox{const}\in(0,\infty),\quad \text{ as }\beta\to\pi .
\end{equation}
Subsequently, we show that 
\begin{equation} \label{inf2}
\int\int F_{2,2}(\angle(u,v))\varphi^{2,2}_-(u,U,v,V)\,\Omega_2(K;d(u,U))\, \Omega_2(K;d(v,V))=\infty ,
\end{equation}
where $\varphi^{2,2}_-$ is the negative part of $\varphi^{2,2}$.
Using \eqref{neuGl}, we can write the integral on the left-hand side of \eqref{inf2} as
\begin{eqnarray*}
\frac 9{4\pi^2}\int_{S^1_{L^\perp}}\int_{S^1_{L^\perp}} F_{2,2}(\angle(u,v))
\int_{G^{u^\perp}(3,1)}\int_{G^{v^\perp}(3,1)}
\langle A,U\rangle^2\varphi^{2,2}_-(u,U,v,V)\langle B,V\rangle^2\\ \hspace{6cm}\nu_1^3(dV)\,\nu_1^3(dU)\,
{\cal H}^1(dv)\,{\cal H}^1(du),
\end{eqnarray*}
with $A=u^\perp\cap L^\perp$ and $B=v^\perp\cap L^\perp$. Since $\varphi^{2,2}_{-}\ge 0$, the application of Fubini's theorem 
was justified.  
Choose a fixed direction $\gamma_0\in L$ and parametrize $U,V$ by spherical coordinates as follows: 
$\gamma_U=\angle(\gamma_0,p_LU)$, $\alpha_U=\angle(U,L)$, $\gamma_V=\angle(\gamma_0,p_LV)$, 
$\alpha_V=\angle(V,L)$. Denoting $\gamma=\gamma_U-\gamma_V$, we get that the negative part 
$\varphi^{2,2}_-$ of $\varphi^{2,2}$ is bounded from below by 
$$\pi^2\sin^2\beta(3-25\sin^2\gamma),$$
provided that $\alpha_U,\alpha_V\leq\pi/4$. In fact, under this restriction we have
\begin{align*}
\varphi^{2,2}_+-\varphi^{2,2}_-&=\varphi^{2,2}\le \pi^2\sin^2\beta\left({25}\sin^2\gamma+\frac{1}{2}+\frac{1}{2}-4\frac{1}{2}-4\frac{1}{2}\right)\\
&\le\pi^2\sin^2\beta\left({25}\sin^2\gamma-3\right),
\end{align*}
from which the estimate for $\varphi^{2,2}_-$ follows.

Thus, for linearly independent $u,v\in S^1_{L^\perp}$, on the set
$$D_{(u,v)}=\left\{(U,V)\in  G^{u^\perp}(3,1)\times G^{v^\perp}(3,1): 0<\alpha_U,\alpha_V<\frac{\pi}4,\,|\sin(\gamma_U-\gamma_V)|\leq \frac 15\right\}$$
we have
$$\varphi^{2,2}_-(u,U,v,V)\geq 2\pi^2\sin^2\beta.$$
The integration over the Grassmannians $G^{u^\perp}(3,1)$ and $G^{v^\perp}(3,1)$ can be written in spherical coordinates ($0<\gamma_U,\gamma_V<2\pi$, $0<\alpha_U,\alpha_V<\pi/2$) as
$$\nu_1^3(dU)=\frac 1{2\pi}\, d\gamma_U\, \cos\alpha_U \,d\alpha_U,\quad 
\nu_1^3(dV)=\frac 1{2\pi}\, d\gamma_V\, \cos\alpha_V \,d\alpha_V.$$
Since $\langle A,U\rangle^2=\sin^2\alpha_U$ and $\langle B,V\rangle^2=\sin^2\alpha_V$, we obtain for all linearly independent vectors  $u,v\in S^1_{L^\perp}$ 
\begin{eqnarray*}
\lefteqn{\int\int_{D_{(u,v)}} \langle A,U\rangle^2\langle B,V\rangle^2\, \nu_1^3(dU)\,\nu_1^3(dV)}\\
&=&\frac 1{(2\pi)^2} \left( \int_0^{2\pi}\int_0^{2\pi} {\bf 1}\{|\sin(\gamma_U-\gamma_V)|\leq\frac 15\}\, d\gamma_U\, d\gamma_V\right)  \left(\int_0^{\pi/4} \sin^2\alpha\cos\alpha\, d\alpha\right)^2\\
&=&\frac 1{4\pi^2} \left(2\pi\cdot 4\arcsin\frac 1{5}\right)\left(\frac{\sqrt{2}}{12}\right)^2\\
&=&\frac 1{36\pi}\arcsin\frac 1{5}.
\end{eqnarray*}
Consequently,
\begin{eqnarray*}
\lefteqn{\int_{G^{u^\perp}(3,1)}\int_{G^{v^\perp}(3,1)} \langle A,U\rangle^2\varphi^{2,2}_-(u,U,v,V)\langle B,V\rangle^2\,\nu_1^3(dV)\,\nu_1^3(dU)}\hspace{6cm}\\
&\geq& \left(\frac {\pi}{18}\arcsin\frac 1{5}\right)\sin^2\beta.
\end{eqnarray*}
Taking into account \eqref{F22}, it is clear that \eqref{inf2} holds.

\end{document}